\title{Relations between two log minimal models of log canonical pairs}
\author{Kenta Hashizume}
\date{2020/08/24, version 0.31}
\keywords{lc pair, extremal contraction, flop}
\subjclass[2010]{primary: 14E30, secondary: 14E05}
\address{Graduate School of Mathematical Sciences, 
The University of Tokyo, 3-8-1 Komaba Meguro-ku Tokyo 153-8914, Japan}
\email{hkenta@ms.u-tokyo.ac.jp}
\newtheorem{thm}{Theorem}[section]
\newtheorem{lem}[thm]{Lemma}
\newtheorem{cor}[thm]{Corollary}
\newtheorem{prop}[thm]{Proposition}
\theoremstyle{definition}
\newtheorem{defn}[thm]{Definition}
\newtheorem{rem}[thm]{Remark}
\newtheorem*{const}{Construction}
\newtheorem{exam}[thm]{Example}
\newtheorem*{ack}{Acknowledgments} 
\newtheorem*{divisor}{Divisors and morphisms} 
\newtheorem*{sing}{Singularities of pairs} 
\newtheorem*{g-pair}{Generalized pairs} 
\newtheorem*{model}{Models}
\newtheorem*{log-mmp}{Log MMP}
\newtheorem{step1}{Step}
\newtheorem*{claim*}{Claim}
\begin{document}

\maketitle

\begin{abstract}
We study relations between two log minimal models of a fixed lc pair. 
For any two log minimal models of an lc pair constructed with log MMP, we prove that there are small birational models of the log minimal models which can be connected by a sequence of flops, and the two log minimal models share some  properties. 
We also give examples of two log minimal models of an lc pair which have different properties. 
\end{abstract}

\tableofcontents

\section{Introduction}
Throughout this paper we work over the complex number field $\mathbb{C}$, and log minimal model means log minimal model in the classical sense (\cite[Definition 3.50]{kollar-mori}). 

For any projective lc pair whose log canonical divisor is pseudo-effective, it is expected that there exists at least one log minimal model of the lc pair. 
The main technique to construct a log minimal model is the log minimal model program (log MMP, for short), and it is conjectured that all log MMP terminate after finitely many steps. 
In the case of surfaces, it is known that log minimal model of any lc surface is unique.  
On the other hand, when the dimension of the underlying variety of an lc pair is greater than two, log minimal models of the lc pair are not uniquely determined. 
Therefore, it is natural to investigate relations between resulting log minimal models of log MMP starting with a fixed lc pair. 
It is known by Birkar--Cascini--Hacon--M\textsuperscript{c}Kernan that any two $\mathbb{Q}$-factorial log minimal models of a klt pair with a big boundary divisor are connected by a sequence of flops (\cite[Corollary 1.1.3]{bchm}). 
On the other hand, Kawamata \cite{kawamata-flop} proved that any birational map between two $\mathbb{Q}$-factorial terminal pairs over a variety with relatively nef log canonical divisors can be decomposed into a sequence of flops. 
In this paper, we aim to generalize these results to not necessarily $\mathbb{Q}$-factorial lc pairs. 
Recently, the author and Hu \cite{hashizumehu} proved that for any not necessarily $\mathbb{Q}$-factorial lc pair over a variety, if its $\mathbb{Q}$-factorial dlt model has a log minimal model then we can run a log MMP for the lc pair terminating with a log minimal model. 
So it is natural to consider generalizations to lc pairs. 

For any two log minimal models $(X,\Delta)$ and $(X',\Delta')$ of a given lc pair constructed by running log MMP, the naturally induced birational map $\phi\colon X\dashrightarrow X'$ has three important properties: $\phi$ is small (i.e., isomorphic in codimension one), $\phi_{*}\Delta=\Delta'$, and there is an open subset $U\subset X$ such that $\phi$ is an isomorphism on $U$ and all lc centers of $(X,\Delta)$ intersect $U$ (see Lemma \ref{lem--mmp-basic}). Taking this into consideration, in this paper, we deal with lc pairs $(X,\Delta)$ and $(X',\Delta')$ over a variety $Z$ such that $K_{X}+\Delta$ and $K_{X'}+\Delta'$ are nef over $Z$ and there is a birational map $\phi\colon X\dashrightarrow X'$ satisfying the three conditions stated above. 

The following theorems are main results of this paper.

\begin{thm}\label{thm--main}
Let $\pi\colon X\to Z$ and $\pi'\colon X'\to Z$ be projective morphisms from normal quasi-projective varieties to a quasi-projective variety $Z$, and let $(X,\Delta)$ and $(X',\Delta')$ be lc pairs such that $K_{X}+\Delta$ and $K_{X'}+\Delta'$ are nef over $Z$. 
Suppose that there is a small birational map $\phi\colon X\dashrightarrow X'$ over $Z$ such that 
\begin{itemize}
\item
$\Delta'=\phi_{*}\Delta$, and
\item
there is an open subset $U\subset X$ such that $\phi$ is an isomorphism on $U$ and all lc centers of $(X,\Delta)$ intersect $U$. 
\end{itemize}

Then, there are projective small birational morphisms $f\colon \overline{X}\to X$ and $f'\colon\overline{X}'\to X'$ from normal quasi-projective varieties such that $f$ and $f'$ are compositions of extremal contractions and the induced birational map $f'^{-1}\circ \phi \circ f\colon \overline{X} \dashrightarrow \overline{X}'$ is a composition of flops for $K_{\overline{X}}+f^{-1}_{*}\Delta$ over $Z$
\begin{equation*}
\xymatrix@C=21pt@R=16pt
{
\overline{X}\ar@{}[r]|*{=}&\overline{X}_{0}\ar@{-->}^{\varphi_{0}}[rr]\ar[dr]&&\overline{X}_{1}\ar@{-->}[r]\ar[dl]&\cdots \ar@{-->}[r]&\overline{X}_{i}\ar@{-->}^{\varphi_{i}}[rr]\ar[dr]&&\overline{X}_{i+1}\ar@{-->}[r]\ar[dl]&\cdots \ar@{-->}[r]&\overline{X}_{l}\ar@{}[r]|*{=}&\overline{X}'\\
&&V_{0}&&&&V_{i}
}
\end{equation*}
satisfying the following property: 
\begin{itemize}\item[($*$)]
For any $0\leq i<l$, the birational morphisms $\overline{X}_{i}\to V_{i}$ and $\overline{X}_{i+1}\to V_{i}$ of the flop are extremal contractions. 
\end{itemize}
In particular, $\overline{X}_{i}$ is $\mathbb{Q}$-factorial if and only if $\overline{X}_{i+1}$ is $\mathbb{Q}$-factorial for any $0\leq i<l$, and each $\varphi_{i}$ induces an isomorphic linear map $\varphi_{i*}\colon N^{1}(\overline{X}_{i}/Z)_{\mathbb{R}}\to N^{1}(\overline{X}_{i+1}/Z)_{\mathbb{R}}$. 
\end{thm}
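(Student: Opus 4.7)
The plan is to construct the small modifications $\overline{X}, \overline{X}'$ together with the flop sequence simultaneously by running a single MMP on a $\mathbb{Q}$-factorial dlt modification of $(X,\Delta)$ over $Z$, guided by an ample divisor pulled back from $X'$. Take a $\mathbb{Q}$-factorial dlt modification $\pi \colon (\tilde{X}, \tilde{\Delta}) \to (X, \Delta)$ with $K_{\tilde{X}} + \tilde{\Delta} = \pi^{*}(K_X + \Delta)$ nef over $Z$. Fix a general ample $\mathbb{R}$-divisor $A'$ on $X'$ over $Z$, and let $A$ on $\tilde{X}$ denote its strict transform via the induced birational map $\tilde{X} \dashrightarrow X'$. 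For sufficiently small $\epsilon > 0$ the pair $(\tilde{X}, \tilde{\Delta} + \epsilon A)$ is lc, and its log canonical model over $Z$ is $(X', \Delta' + \epsilon A')$, since $K_{X'} + \Delta' + \epsilon A'$ is ample over $Z$.

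Next, run a $(K_{\tilde{X}} + \tilde{\Delta} + \epsilon A)$-MMP over $Z$ with scaling of an ample divisor. By the Hashizume--Hu termination theorem cited in the introduction, this MMP terminates at a log minimal model $\overline{X}'$ that is a small birational modification of $X'$. The key observation is that, because $K_{\tilde{X}} + \tilde{\Delta}$ is already nef, choosing $\epsilon$ small enough ensures every extremal ray $R$ encountered satisfies $(K_{\tilde{X}} + \tilde{\Delta}) \cdot R = 0$ (so $A\cdot R < 0$); hence each divisorial contraction contracts a $(K_{\tilde{X}} + \tilde{\Delta})$-trivial divisor, and each flipping step is a flop for $K + \Delta$. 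The divisorial contractions must exhaust the $\pi$-exceptional divisors of $\tilde{X} \to X$, since $\overline{X}'$ is a small modification of $X'$ and $\phi$ is small, so the correspondence between divisors on the two sides matches via the hypothesis that all lc centers meet $U$. By reorganizing the MMP so that divisorial contractions occur first (which is permissible because every step is $K + \tilde{\Delta}$-trivial), obtain an intermediate model from which the flops proceed; the small morphism $f \colon \overline{X} \to X$ is then realized as a composition of small extremal contractions by iteratively contracting individual extremal rays of the relative nef cone of $\overline{X}/X$.

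The main obstacles are: (i) establishing that $\epsilon$ small enough forces every extremal ray to have $(K_{\tilde{X}} + \tilde{\Delta}) \cdot R = 0$ exactly, which requires a cone-theoretic finiteness result applicable to the lc setting; (ii) arranging the divisorial contractions and small steps so that $\overline{X}$ is naturally both a small modification of $X$ and the starting point of the flop sequence, which involves a careful reordering of MMP steps and an analysis of the strict transforms of the $\pi$-exceptional divisors; and (iii) realizing $f \colon \overline{X} \to X$ as a composition of small extremal contractions, likely by invoking the contraction theorem iteratively on faces of the relative nef cone. Once the construction is in place, the remaining assertions --- preservation of $\mathbb{Q}$-factoriality under flops and the induced linear isomorphism $\varphi_{i*} \colon N^{1}(\overline{X}_{i}/Z)_{\mathbb{R}} \to N^{1}(\overline{X}_{i+1}/Z)_{\mathbb{R}}$ --- are standard consequences of the fact that both legs of each flop diagram are extremal contractions of small type.
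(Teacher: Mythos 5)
Your proposal differs fundamentally from the paper's argument, and there is a real gap at the heart of it. The plan starts from a $\mathbb{Q}$-factorial dlt modification $\pi\colon(\tilde{X},\tilde{\Delta})\to(X,\Delta)$ and runs a $(K_{\tilde{X}}+\tilde{\Delta}+\epsilon A)$-MMP, asserting that the divisorial contractions ``must exhaust the $\pi$-exceptional divisors'' so that the resulting minimal model $\overline{X}'$ is small over $X'$. This does not hold. Every $\pi$-exceptional prime divisor $E$ of the dlt blow-up has $a(E,\tilde{X},\tilde{\Delta})=-1$, and since $A$ is the strict transform of a general ample $A'$ (whose support avoids the lc centers), one has $a(E,\tilde{X},\tilde{\Delta}+\epsilon A)=-1$ as well. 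On the other side, $a(E,X',\Delta')=a(E,X,\Delta)=-1$ by crepancy through a common resolution, and for general $A'$ the pullback picks up no multiplicity along $E$, so $a(E,X',\Delta'+\epsilon A')=-1$ too. Since the discrepancy does not strictly increase, the MMP \emph{cannot} contract $E$: by the very definition of a log minimal model (\cite[Definition 3.50]{kollar-mori}), a contracted divisor must have strictly increasing discrepancy. Hence the terminal model $\overline{X}'$ retains all the $\pi$-exceptional divisors, and the induced morphism $\overline{X}'\to X'$ is a dlt-type model, not a small modification. The conclusion you need to reach --- that $f'\colon\overline{X}'\to X'$ is small --- simply fails to materialize.

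This is exactly why the paper does not start from a dlt modification. Instead, Proposition \ref{prop--smallmodif} constructs a \emph{small} birational morphism $\psi\colon\overline{X}\to X$ such that the birational transform of $A'$ becomes $\mathbb{R}$-Cartier on $\overline{X}$ and $(\overline{X},\psi^{-1}_*\Delta+\overline{A})$ is lc; the dlt blow-up there is only an intermediate tool, and the key step is to take the \emph{log canonical model over $X$} (not over $Z$) of the perturbed dlt pair, so that the exceptional divisors are re-contracted and $\psi$ is small. Once one has $\overline{X}$ of this shape, the $(K_{\overline{X}}+\overline{\Delta}+\epsilon A)$-MMP over $Z$ (which exists in the non-$\mathbb{Q}$-factorial category by \cite[Theorem 1.7]{hashizumehu}) consists entirely of flips, because $\phi$ is small and $\overline{X}$ is a small modification of $X$, so the perturbed log canonical divisor is movable over $Z$. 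The paper then argues by induction on $\rho(Y/Z)-\max\{\rho(X/Z),\rho(X'/Z)\}$ for a fixed smooth model $Y$, carefully tracking Picard numbers to detect when the MMP lands on the desired $\overline{X}'$ and when a further replacement is needed. Your proposal has no analogue of this induction, and it would also need an extra argument for points (ii) and (iii) that you flag as obstacles; your claim that MMP steps can be ``reorganized so that divisorial contractions occur first'' because they are $(K+\tilde{\Delta})$-trivial is not justified (the perturbed divisor controls the order of steps, and reordering in general is not permissible). The remark at the end about $\mathbb{Q}$-factoriality and $N^1$-isomorphisms being consequences of symmetric flops is correct and matches Lemma \ref{lem--symflop}, but you cannot reach that stage with the current construction.
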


\begin{thm}\label{thm--pic0}
Let $\pi\colon X\to Z$, $\pi'\colon X'\to Z$, $(X,\Delta)$, $(X',\Delta')$ and $\phi\colon X\dashrightarrow X'$ be as in Theorem \ref{thm--main}. 
Then the followings hold true.

\begin{itemize}
\item
$X$ has a small $\mathbb{Q}$-factorialization if and only if $X'$  has a small $\mathbb{Q}$-factorialization, 
\item
there is an isomorphism $R^{p}\pi_{*}\mathcal{O}_{X}\overset{\sim}{\longrightarrow} R^{p}\pi'_{*}\mathcal{O}_{X'}$ of sheaves for any $p>0$, and
\item 
for any Cartier divisor $D$ on $X$ such that $D \equiv_{Z} r(K_{X}+\Delta)$ for an $r\in \mathbb{R}$, the birational transform $\phi_{*}D$ is Cartier and $\phi_{*}D \equiv_{Z} r(K_{X'}+\Delta')$. 
\end{itemize}
\end{thm}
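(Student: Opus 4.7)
The plan is to reduce each of the three assertions to properties preserved by the flop chain produced by Theorem~\ref{thm--main}. Applying that theorem yields small birational morphisms $f\colon\overline{X}\to X$ and $f'\colon\overline{X}'\to X'$, each a composition of extremal contractions, together with a chain $\overline{X}=\overline{X}_{0}\dashrightarrow\cdots\dashrightarrow\overline{X}_{l}=\overline{X}'$ of flops over bases $V_{i}$, with each flop preserving $\mathbb{Q}$-factoriality.

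For the first bullet, if $\overline{X}$ is $\mathbb{Q}$-factorial then $f$ is itself a small $\mathbb{Q}$-factorialization of $X$ and, by preservation of $\mathbb{Q}$-factoriality along the chain, $f'$ is one for $X'$. Conversely, given a small $\mathbb{Q}$-factorialization $g\colon Y\to X$ with $\Delta_{Y}=g^{-1}_{*}\Delta$, the pair $(Y,\Delta_{Y})$ is lc and $K_{Y}+\Delta_{Y}=g^{*}(K_{X}+\Delta)$ is nef over $Z$, so I apply Theorem~\ref{thm--main} to $(Y,\Delta_{Y})$ and $(X',\Delta')$ via $\phi\circ g$, verifying the open-set hypothesis on lc centers through the crepant correspondence under $g$. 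A sublemma---any projective small birational modification of a normal $\mathbb{Q}$-factorial variety is an isomorphism, since a relatively ample divisor would equal the pullback of its pushforward and hence be trivial on positive-dimensional fibres---forces the resulting small model $\overline{Y}$ of $Y$ to be $Y$ itself, so by flop-invariance the companion small model of $X'$ is $\mathbb{Q}$-factorial, yielding the required small $\mathbb{Q}$-factorialization of $X'$.

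For the second bullet, Koll\'ar's relative vanishing for lc pairs gives $R^{p}h_{*}\mathcal{O}=0$ for $p>0$ whenever $h$ is a proper small crepant birational morphism from an lc pair, since higher direct images are torsion-free but supported in codimension at least two. Applying this to $f$, $f'$ and to the flopping contractions $\overline{X}_{i}\to V_{i}$ and $\overline{X}_{i+1}\to V_{i}$, combined with the Leray spectral sequence for $\pi$, $\pi'$, and each $V_{i}\to Z$, telescopes into a chain of isomorphisms $R^{p}\pi_{*}\mathcal{O}_{X}\cong R^{p}(\pi\circ f)_{*}\mathcal{O}_{\overline{X}}\cong R^{p}(\pi_{V_{0}})_{*}\mathcal{O}_{V_{0}}\cong\cdots\cong R^{p}\pi'_{*}\mathcal{O}_{X'}$.

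For the third bullet, pull $D$ back to the Cartier divisor $f^{*}D$ on $\overline{X}$, which satisfies $f^{*}D\equiv_{Z}r(K_{\overline{X}}+f^{-1}_{*}\Delta)$. At each flop $\overline{X}_{i}\to V_{i}\leftarrow\overline{X}_{i+1}$ this numerical equivalence forces the divisor to be numerically trivial over $V_{i}$, so its line bundle is pulled back from $V_{i}$ and in turn pulls back to a Cartier divisor on $\overline{X}_{i+1}$ with the same equivalence; iterating produces a Cartier divisor $\tilde D$ on $\overline{X}'$ whose pushforward along $f'$ equals $\phi_{*}D$ as a Weil divisor. To conclude that $\phi_{*}D$ is Cartier on $X'$, I plan to descend $\tilde D$ step by step through the extremal contractions composing $f'$: each is crepant and small, its fibres are rationally chain connected (so have trivial $\mathrm{Pic}^{0}$) by the standard theory of small extremal contractions, and $\tilde D$ is numerically trivial over the target, so $\mathcal{O}(\tilde D)$ restricts trivially to every fibre and descends as a line bundle. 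I expect this line-bundle descent through the small extremal contractions of $f'$ to be the main technical obstacle, since Cartier divisors numerically trivial over a small extremal contraction do not in general descend without the rational-connectedness input together with the cohomological comparison from the second bullet.
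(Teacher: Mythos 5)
Your overall strategy --- reduce everything to the flop chain of Theorem~\ref{thm--main} --- matches the paper for the second and third bullets, but for the first bullet you depart from the paper, and there is a gap there. You propose to apply Theorem~\ref{thm--main} to $(Y,\Delta_Y)$ and $(X',\Delta')$ via $\phi\circ g$, where $g\colon Y\to X$ is a small $\mathbb{Q}$-factorialization. The problem is verifying the open-set hypothesis: the locus where $\phi\circ g$ is an isomorphism is $g^{-1}(U\cap U_g)$, where $U_g$ is the maximal open on which $g$ is an isomorphism, and there is no reason for lc centers of $(X,\Delta)$ of codimension $\geq 2$ to meet $U_g$. A small $\mathbb{Q}$-factorialization is allowed to modify $X$ along exactly such a center, in which case the corresponding lc center of $(Y,\Delta_Y)$ will not meet the isomorphism locus of $\phi\circ g$, and Theorem~\ref{thm--main} cannot be invoked. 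The crepant correspondence you gesture at guarantees that lc centers of $(Y,\Delta_Y)$ meet $g^{-1}(U)$, not the smaller $g^{-1}(U\cap U_g)$. The paper sidesteps this entirely: it never applies Theorem~\ref{thm--main} to $(Y,\Delta_Y)$, but instead transports an ample divisor $A'$ from $X'$ to $W$, checks that $(W,\Gamma+\epsilon A_W)$ is lc (which only needs lc centers to meet $g^{-1}(U)$, not the smaller set), and then runs a $(K_W+\Gamma+\epsilon A_W)$-MMP over $Z$ via \cite[Theorem~1.7]{hashizumehu}, which lands on a $\mathbb{Q}$-factorial good minimal model mapping small-birationally to $X'$. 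Your sublemma (small projective modifications of $\mathbb{Q}$-factorial varieties are isomorphisms) is correct and clean, but it does not rescue the hypothesis failure; also, the first sentence of your first-bullet paragraph (the case ``$\overline{X}$ is $\mathbb{Q}$-factorial'') proves nothing toward either direction of the equivalence, since Theorem~\ref{thm--main} does not assert $\overline{X}$ is $\mathbb{Q}$-factorial.

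For the third bullet, you correctly identify the descent through $f'$ as the crux, but the route you sketch --- rational chain connectedness of fibres and triviality of $\mathrm{Pic}^0$ --- is not what the paper does and is not established for lc pairs (Hacon--M\textsuperscript{c}Kernan's result is for klt). The paper instead proves Lemma~\ref{lem--extcont}: for the small morphism $f'$ satisfying the lc-center hypothesis, one builds an $\mathbb{R}$-divisor $G'\geq 0$ with $(X',\psi^{-1}_*\Delta+G')$ lc and $-(K+\psi^{-1}_*\Delta+G')$ relatively ample, then applies Fujino's cone and contraction theorem \cite[Theorem~4.5.2~(4)(iii)]{fujino-book} step by step through the extremal contractions composing $f'$; this yields exactly that any Cartier $D'$ with $D'\equiv_{X'}0$ is $\sim f'^{*}D$ for Cartier $D$ on $X'$. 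Similarly, at each flopping contraction $g_i$ you assert the numerically trivial divisor ``is pulled back from $V_i$,'' which again requires a citation: the paper uses property~(b) of Theorem~\ref{thm--flop} together with \cite[Theorem~13.1]{fujino-fund} to get relative global generation, from which the descent follows. Your second-bullet argument via torsion-freeness of higher direct images plus codimension-$\geq 2$ support is a legitimate alternative to the paper's appeal to \cite[Theorem~5.6.4]{fujino-book}, provided you cite a torsion-freeness theorem valid for lc pairs with relatively numerically trivial log canonical divisor; the paper's route through the anti-ample auxiliary pairs supplied by Lemma~\ref{lem--extcont} and property~(b) is more self-contained.
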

For definition of extremal contractions, see Definition \ref{defn--extcont}. 
We emphasize that $D$ and $\phi_{*}D$ in the third assertion of Theorem \ref{thm--pic0} are Cartier. 
To prove these results, we apply a result of non-$\mathbb{Q}$-factorial log MMP (\cite[Theorem 1.7]{hashizumehu}). 
For details, see Section \ref{sec3}.  

By Theorem \ref{thm--pic0}, we obtain the following result. 

\begin{cor}\label{cor--cartierindex}
Let $X\to Z$ be a projective morphism from a normal quasi-projective variety to a quasi-projective variety and $(X,\Delta)$ an lc pair such that $\Delta$ is a boundary  $\mathbb{Q}$-divisor. 
Let $(X,\Delta)\dashrightarrow (Y,\Gamma)$ and $(X,\Delta)\dashrightarrow (Y',\Gamma')$ be two sequences of steps of the $(K_{X}+\Delta)$-MMP over $Z$ to log minimal models. 
Then the followings hold true. 
\begin{itemize}
\item
the Cartier index of $K_{Y}+\Gamma$ coincides with that of $K_{Y'}+\Gamma'$, and
\item
when $Z$ is a point, ${\rm Pic}(Y)_{\mathbb{R}}\simeq N^{1}(Y)_{\mathbb{R}}$ if and only if ${\rm Pic}(Y')_{\mathbb{R}}\simeq N^{1}(Y')_{\mathbb{R}}$. 
\end{itemize}
\end{cor}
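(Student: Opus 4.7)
The plan is to apply Theorem \ref{thm--pic0} to the two log minimal models via the induced birational map $\phi\colon Y\dashrightarrow Y'$. First I would verify the hypotheses: since both $(Y,\Gamma)$ and $(Y',\Gamma')$ arise from $(X,\Delta)$ by steps of the $(K_{X}+\Delta)$-MMP over $Z$, the divisors $K_{Y}+\Gamma$ and $K_{Y'}+\Gamma'$ are both nef over $Z$; and as recalled in the introduction, the induced map between two log minimal models of a fixed lc pair is small, satisfies $\phi_{*}\Gamma=\Gamma'$, and is an isomorphism on an open subset $U\subset Y$ meeting every lc center of $(Y,\Gamma)$. So Theorem \ref{thm--pic0} applies to $\phi$, and by symmetry also to $\phi^{-1}$.

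For the first bullet, let $r\geq 1$ be the Cartier index of $K_{Y}+\Gamma$ and set $D:=r(K_{Y}+\Gamma)$. Then $D$ is an integral Cartier divisor with $D\equiv_{Z}r(K_{Y}+\Gamma)$ tautologically. The third bullet of Theorem \ref{thm--pic0} then yields that $\phi_{*}D=r(K_{Y'}+\Gamma')$ is Cartier, so the Cartier index of $K_{Y'}+\Gamma'$ divides $r$. Reversing the roles of $Y$ and $Y'$ via $\phi^{-1}$ gives the opposite divisibility, and the two Cartier indices coincide.

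For the second bullet, take $Z$ a point. The second bullet of Theorem \ref{thm--pic0} specializes to $H^{1}(Y,\mathcal{O}_{Y})\simeq H^{1}(Y',\mathcal{O}_{Y'})$, so $h^{1}(\mathcal{O}_{Y})=h^{1}(\mathcal{O}_{Y'})$. For a normal projective variety $W$ over $\mathbb{C}$, the kernel of the natural surjection $\mathrm{Pic}(W)_{\mathbb{R}}\to N^{1}(W)_{\mathbb{R}}$ vanishes precisely when $\dim \mathrm{Pic}^{0}(W)=h^{1}(\mathcal{O}_{W})=0$. Applying this to $W=Y$ and $W=Y'$ yields the desired equivalence.

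The only non-formal point in this argument is the verification in the first paragraph that $\phi$ satisfies the hypotheses of Theorem \ref{thm--pic0}. This is standard for log minimal models constructed by MMP but requires the fact that steps of the lc MMP do not contract lc centers of $(X,\Delta)$, together with the negativity-lemma type argument showing that the induced birational map between two log minimal models is small and preserves the boundary; since the author explicitly records this as a property of log minimal models in the introduction, I would simply cite it.
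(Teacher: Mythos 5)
Your proof is correct and takes the same route as the paper: reduce to Theorem~\ref{thm--pic0} by verifying its hypotheses for the induced map $\phi\colon Y\dashrightarrow Y'$. The deductions you make explicit (applying the third bullet of Theorem~\ref{thm--pic0} to $D=r(K_Y+\Gamma)$ and, for the second bullet, identifying the kernel of ${\rm Pic}(W)_{\mathbb R}\to N^1(W)_{\mathbb R}$ with ${\rm Pic}^0(W)_{\mathbb R}$, which vanishes exactly when $h^1(\mathcal O_W)=0$) are correct and are taken for granted in the paper; conversely, the paper devotes its entire proof to the verification you defer to a citation. One caveat worth flagging: the fact you invoke, that ``steps of the lc MMP do not contract lc centers of $(X,\Delta)$,'' is not the right formulation and is false as stated --- a divisorial contraction in an lc MMP can perfectly well collapse a divisorial lc center. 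What is actually needed, and what the paper spells out, is a discrepancy computation: for any prime divisor $P$ over $Y$ with $a(P,Y,\Gamma)=-1$, the log-minimal-model inequalities together with the lc condition force $a(P,X,\Delta)=a(P,Y,\Gamma)=a(P,Y',\Gamma')=-1$; since MMP steps strictly increase discrepancies of divisors whose centers lie entirely over the non-isomorphism locus, $c_X(P)$ must meet both isomorphism loci $V$ and $V'$, and $U=\varphi(V\cap V')$ is the desired open set.
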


Unfortunately, there are two (not necessarily $\mathbb{Q}$-factorial) log minimal models of an lc pair which cannot be connected by a sequence of flops (see Example \ref{exam--1}). 
In addition to this example, in Section \ref{sec4}, we introduce various examples related to the main results. 
In Example \ref{exam--2}, we give a flop for an lc pair which does not satisfy the condition ($*$) in Theorem \ref{thm--main} (see also Remark \ref{rem--inverseflop}). 
In Example \ref{exam--3}, we give two projective lc pairs with nef log canonical divisors such that the two lc pairs are isomorphic in codimension one but the assertion of Theorem \ref{thm--main} and the three assertions of Theorem \ref{thm--pic0} do not hold (for the second assertion of Theorem \ref{thm--pic0}, see also Example \ref{exam--4}). 
In particular, Example \ref{exam--3} shows that to prove the same conclusions as in Theorem \ref{thm--main} and Theorem \ref{thm--pic0}, the hypothesis on the existence of an open subset is necessary. 
Related to Corollary \ref{cor--cartierindex}, in Example \ref{exam--6}, we construct two projective lc pairs with nef log canonical divisors such that the pairs are isomorphic in codimension one and Cartier indices of the log canonical divisors are different. 
Ideas of examples \ref{exam--2}, \ref{exam--3}, \ref{exam--4} and \ref{exam--6} are inspired by \cite[Exercise 96]{kollar-exercise} (see also \cite[Example 7.7]{fujino-remarks}), and these examples are similar to \cite[Example 3.13.9]{fujino-book} by Fujino, which shows that flops do not always preserve dlt property.  

The contents of this paper are as follows: In Section \ref{sec2}, we collect some definitions. 
In Section \ref{sec3}, we prove Theorem \ref{thm--main}, Theorem \ref{thm--pic0} and Corollary \ref{cor--cartierindex}. 
In Section \ref{sec4}, we give some examples.

\begin{ack}
The author was partially supported by JSPS KAKENHI Grant Number JP19J00046. 
He thanks Professor Yoshinori Gongyo, Doctor Masaru Nagaoka and Doctor Shou Yoshikawa for discussions and comments. 
He thanks Professor Osamu Fujino for comments and informing him of \cite[Example 3.13.9]{fujino-book}. 
He thanks the referee for a lot of suggestions. 
\end{ack}

\section{Preliminaries}\label{sec2}

\begin{divisor}
Let $\pi\colon X\to Z$ be a projective morphism from a normal variety to a variety, and let 
$D_{1}$ and $D_{2}$ be two $\mathbb{R}$-Cartier divisors on $X$. 
Then $D_{1}$ and $D_{2}$ are {\em numerically equivalent over $Z$}, denoted by $D_{1}\equiv_{Z}D_{2}$, if $(D_{1}\cdot C)=(D_{2}\cdot C)$ for all curves $C\subset X$ contained in a fiber of $\pi$. 
When $D_{1}\equiv_{Z}0$, we say $D_{1}$ is {\em numerically trivial over $Z$}. 
Let $N^{1}(X/Z)_{\mathbb{R}}$ be the $\mathbb{R}$-vector space consisting of all $\mathbb{R}$-Cartier divisors on $X$ modulo numerical equivalence over $Z$. 
Then the {\em relative Picard number} $\rho(X/Z)$ is defined by the dimension of $N^{1}(X/Z)_{\mathbb{R}}$ as an $\mathbb{R}$-vector space. 
It is known that $\rho(X/Z)<\infty$. 
When $Z$ is a point we remove $Z$ in the above notation, i.e., $D_{1}\equiv D_{2}$, $D_{1}\equiv 0$, $N^{1}(X)_{\mathbb{R}}$, and $\rho(X)$. 

\begin{defn}[Contraction and extremal contraction,  see also {\cite[Definition 3.34]{kollar-mori}}]\label{defn--extcont}
Let $f\colon X\to Y$ be a projective morphism from a normal variety to a variety. 
Then $f$ is a {\em contraction} if $f$ is surjective and has connected fibers. 
A contraction $f$ is an {\em extremal contraction} if for any two Cartier divisors $D_{1}$ and $D_{2}$, there are $a_{1},a_{2}\in \mathbb{Z}$ which are not both zero and a Cartier divisor $D_{Y}$ on $Y$ such that $a_{1}D_{1}-a_{2}D_{2}\sim f^{*}D_{Y}$. 
\end{defn}
\end{divisor}

For any surjective morphism $X\to Z$ of normal projective varieties, we have $\rho(X)-\rho(Z)\geq \rho(X/Z)$. 
In general, the equality does not hold (see, for example, \cite[V, Exercise 1.6]{hartshorne}).

\begin{sing}
A {\em pair} $(X,\Delta)$ consists of a normal variety $X$ and a boundary $\mathbb{R}$-divisor $\Delta$ on $X$ such that $K_{X}+\Delta$ is $\mathbb{R}$-Cartier. 
For any pair $(X,\Delta)$ and any prime divisor $P$ over $X$, we denote by $a(P,X,\Delta)$ the discrepancy of $P$ with respect to $(X,\Delta)$. 
In this paper, we use the standard definitions of Kawamata log terminal (klt, for short) pair, log canonical (lc, for short) pair and divisorially log terminal (dlt, for short) pair as in \cite{bchm}. 
When $(X,\Delta)$ is an lc pair, an {\em lc center} of $(X,\Delta)$ is the image on $X$ of a prime divisor $P$ over $X$ whose discrepancy $a(P,X,\Delta)$ is equal to $-1$. 
\end{sing}

\begin{model} We define some models used in this paper. 

\begin{defn}\label{defn--minimalmodel}
In this paper, weak lc models, log minimal models and log canonical models are defined in the classical sense (\cite[Definition 3.50]{kollar-mori}). 
We write down the definitions of those models. 

Let $(X,\Delta)$ be an lc pair and $X\to Z$ a projective morphism to a variety. 
Let $X'\to Z$ be a projective morphism from a normal variety and let $\phi\colon X\dashrightarrow X'$ be a birational contraction over $Z$ such that $K_{X'}+\phi_{*}\Delta$ is $\mathbb{R}$-Cartier. 
Put $\Delta'=\phi_{*}\Delta$. 
Then the pair $(X',\Delta')$ is a {\em weak log canonical model} ({\em weak lc model}, for short) of $(X,\Delta)$ over $Z$ if 
\begin{itemize}
\item
$K_{X'}+\Delta'$ is nef over $Z$, and 
\item
for any prime divisor $D$ on $X$ which is exceptional over $X'$, we have
\begin{equation*}
a(D, X, \Delta) \leq a(D, X', \Delta').
\end{equation*}
\end{itemize}
A weak lc model $(X',\Delta')$ of $(X,\Delta)$ over $Z$ is a {\it log minimal model} if 
\begin{itemize}
\item
the above inequality on discrepancies is strict. 
\end{itemize}
A log minimal model $(X',\Delta')$ of $(X,\Delta)$ over $Z$ is a {\it good minimal model} if 
\begin{itemize}
\item
$K_{X'}+\Delta'$ is semi-ample over $Z$. 
\end{itemize}
A weak lc model $(X',\Delta')$ of $(X,\Delta)$ over $Z$ is a {\it log canonical model} if 
\begin{itemize}
\item
$K_{X'}+\Delta'$ is ample over $Z$. 
\end{itemize}
In this paper, we do not assume log minimal models to be $\mathbb{Q}$-factorial or dlt. 
\end{defn}

For any lc pair $(X,\Delta)$ on a normal quasi-projective variety $X$, we can construct a {\em dlt blow-up} $(Y,\Gamma)\to (X,\Delta)$ of $(X,\Delta)$ as in \cite[Theorem 10.4]{fujino-fund} or \cite[Theorem 3.1]{kollarkovacs}.

\begin{defn}[$D$-flip]
Let $X\to Z$ be a projective morphism from a normal variety to a variety and $D$ an $\mathbb{R}$-Cartier divisor on $X$. 
A birational morphism $f\colon X\to V$ over $Z$, where $V$ is projective over $Z$ and normal, is a $D$-{\em flipping contraction over $Z$} if 
\begin{itemize}
\item
$\rho(X/V)=1$, 
\item
$f$ is small, i.e., $f$ is isomorphic in codimension one, and
\item
$-D$ is ample over $V$. 
\end{itemize}
Let $f'\colon X'\to V$ be a projective birational morphism over $Z$ from a normal variety $X'$, and let $\varphi\colon X\dashrightarrow X'$ be the induced birational map. 
Then $f'$ is a {\em flip} of $f$ if 
\begin{itemize}
\item
$f'$ is small, and
\item
$\varphi_{*}D$ is $\mathbb{R}$-Cartier and ample over $V$. 
\end{itemize}
We sometimes call the whole diagram $D$-flip.
\end{defn}

\begin{defn}[flop {(cf.~\cite[Definition 7.5]{fujino-remarks})} and symmetric flop]\label{defn--flop}
Let $X\to Z$ be a projective morphism from a normal variety to a variety, and let $(X,\Delta)$ be an lc pair. 
In this paper, a {\em flop for $K_{X}+\Delta$ over $Z$} consists of the following diagram 
\begin{equation*}
\xymatrix@R=16pt
{
X\ar@{-->}^{\varphi}[rr]\ar[dr]^{f}\ar[ddr]&&X' \ar[dl]_{f'}\ar[ddl]\\
&V\ar[d]\\
&Z
}
\end{equation*}
such that
\begin{itemize}
\item
$f$ is a $D$-flipping contraction over $Z$ for some $\mathbb{R}$-Cartier divisor $D$ on $X$, 
\item
$f'$ is a flip of $f$, and
\item
$K_{X}+\Delta \equiv_{V}0$.
\end{itemize}
In this paper, with notation as above, a flop for $K_{X}+\Delta$ over $Z$ is a {\em symmetric flop} if 
\begin{itemize}
\item
both $f$ and $f'$ are extremal contractions. 
\end{itemize}
\end{defn}

In general, flops are not always symmetric flops even if $Z={\rm Spec}\mathbb{C}$, $D = K_{X}+B$ for an lc pair $(X,B)$ and $\rho(X)=\rho(X')=\rho(V)+1$ (see Remark \ref{rem--inverseflop}).

The following lemma gives a sufficient condition for flops being symmetric. 

\begin{lem}\label{lem--suffcond}
With notation as in Definition \ref{defn--flop}, suppose that 
\begin{itemize}
\item
$Z$ is quasi-projective, 
\item
$\rho(X/Z)=\rho(X'/Z)$, and 
\item
there is an effective $\mathbb{R}$-Cartier divisor $E$ on $X$ such that $(X,\Delta+E)$ is lc and $-(K_{X}+\Delta+E)$ is ample over $V$. 
\end{itemize}
Then the flop is a symmetric flop. 
Furthermore, there is an effective $\mathbb{R}$-Cartier divisor $F'$ on $X'$ such that $(X',\varphi_{*}\Delta+F')$ is lc and $-(K_{X'}+\varphi_{*}\Delta+F')$ is ample over $V$. 
\end{lem}

\begin{proof}[{\rm \bf Proof}]
Since $Z$ is quasi-projective, $(X,\Delta+E)$ is lc and $-(K_{X}+\Delta+E)$ is ample over $V$, we can find an $\mathbb{R}$-Cartier divisor $H \geq 0$ on $X$ such that $(X,\Delta+E+H)$ is lc and $K_{X}+\Delta+E+H\equiv_{V}0$. 
Since $\rho(X/V)=1$, by applying the standard argument of convex geometry and \cite[Theorem 13.1]{fujino-fund}, for any $\mathbb{R}$-Cartier divisor $G$ on $X$ we can find a real number $r$ such that $G-rD$ is $\mathbb{R}$-linearly equivalent to the pullback of an $\mathbb{R}$-Cartier divisor on $V$. 
By taking the birational transform on $X'$, we see that $\varphi_{*}G-r\varphi_{*}D$ is $\mathbb{R}$-linearly equivalent to the pullback of an $\mathbb{R}$-Cartier divisor on $V$. 
Since $\varphi_{*}D$ is $\mathbb{R}$-Cartier, $\varphi_{*}G$ is also $\mathbb{R}$-Cartier. 
In this way, for any $\mathbb{R}$-Cartier divisor $G$ on $X$ the birational transform $\varphi_{*}G$ is $\mathbb{R}$-Cartier. 
Then $\varphi$ induces a linear map $\varphi_{*}\colon N^{1}(X/Z) \to N^{1}(X'/Z)$. 
Since $\varphi$ is small, by the negativity lemma, the linear map is injective. 
Then this is an isomorphism because $\rho(X/Z)=\rho(X'/Z)$ by hypothesis. 

We first find an $\mathbb{R}$-divisor $F'\geq 0$ of Lemma \ref{lem--suffcond}. 
By definition of $H$, the divisor $K_{X'}+\varphi_{*}(\Delta+E+H)$ is $\mathbb{R}$-Cartier. 
Furthermore, we have $K_{X'}+\varphi_{*}(\Delta+E+H) \equiv_{V}0$, and the pair $(X',\varphi_{*}(\Delta+E+H))$ is lc by the negativity lemma. 
By definition, $-E$ is ample over $V$, so $E\sim_{\mathbb{R},V} \alpha D$ for some $\alpha >0$. 
Then $\varphi_{*}E\sim_{\mathbb{R},V} \alpha \varphi_{*}D$. 
Since $\varphi_{*}D$ is ample over $V$, we see that $\varphi_{*}E$ is ample over $V$. 
Therefore, the divisor
$$-(K_{X'}+\varphi_{*}\Delta+\varphi_{*}H)=-(K_{X'}+\varphi_{*}(\Delta+E+H))+\varphi_{*}E$$
is ample over $V$. 
Since $\varphi_{*}E$ is effective, we see that the pair $(X',\varphi_{*}(\Delta+H))$ is lc. 
Putting $F'=\varphi_{*}H$, we get the desired effective $\mathbb{R}$-divisor. 

Next, we prove that the flop is a symmetric flop. 
We have seen that for any $\mathbb{R}$-Cartier divisor $G$ on $X$ we can find a real number $r$ such that $G-rD\sim_{\mathbb{R},V}0$. 
From this, it is easy to see that $f\colon X\to V$ is an extremal contraction. 
Let $D'_{1}$ and $D'_{2}$ be any Cartier divisors on $X'$. 
Since $\varphi_{*}\colon N^{1}(X/Z) \to N^{1}(X'/Z)$ is an isomorphism, we may find $\mathbb{Q}$-Cartier divisors $B_{1}$ and $B_{2}$ on $X$ such that $\varphi_{*}B_{1}\equiv_{Z}D'_{1}$ and $\varphi_{*}B_{2}\equiv_{Z}D'_{2}$. 
Since $(X',\varphi_{*}\Delta+F')$ is lc and $-(K_{X'}+\varphi_{*}\Delta+F')$ is ample over $V$ for some $F'\geq 0$, by \cite[Theorem 13.1]{fujino-fund}, we obtain $\varphi_{*}B_{1}\sim_{\mathbb{Q},V}D'_{1}$ and $\varphi_{*}B_{2}\sim_{\mathbb{Q},V}D'_{2}$. 
Furthermore, since $f$ is an extremal contraction, there are rational numbers $a_{1},a_{2}$ which are not both zero such that $a_{1}B_{1}\sim_{\mathbb{Q},V} a_{2}B_{2}$. 
Then
$$a_{1}D'_{1}\sim_{\mathbb{Q},V}a_{1}\varphi_{*}B_{1}\sim_{\mathbb{Q},V} a_{2}\varphi_{*}B_{2}\sim_{\mathbb{Q},V}D'_{2}.$$
This shows that $f'\colon X'\to V$ is an extremal contraction. 
From these discussion, $f$ and $f'$ are both extremal contractions, and we see that the flop is a symmetric flop. 
\end{proof}

The following lemma states properties of symmetric flops. 

\begin{lem}\label{lem--symflop}
With notation as in Definition \ref{defn--flop}, the following properties hold true for symmetric flops. 
\begin{itemize}
\item
$\rho(X/V)=\rho(X'/V)=1$, 
\item
for any $\mathbb{R}$-divisor $G$ on $X$, the birational transform $\varphi_{*}G$ on $X'$ is $\mathbb{R}$-Cartier if and only if $G$ is $\mathbb{R}$-Cartier, in particular, $X$ is $\mathbb{Q}$-factorial if and only if $X'$ is $\mathbb{Q}$-factorial, and
\item
the map $\varphi$ induces an isomorphic linear map $\varphi_{*}\colon N^{1}(X/Z)_{\mathbb{R}}\to N^{1}(X'/Z)_{\mathbb{R}}$, in particular, the equality $\rho(X/Z)=\rho(X'/Z)$ holds. 
\end{itemize}
\end{lem}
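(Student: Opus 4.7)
The proof naturally splits along the three bulleted assertions, and the last two rest on a key lemma extracted from the definition of extremal contraction.

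For the first assertion, $\rho(X/V) = 1$ holds by the definition of a $D$-flipping contraction. For $\rho(X'/V) = 1$, I use that $f'$ is extremal: given any two Cartier divisors $C_1, C_2$ on $X'$, the relation $a_1 C_1 - a_2 C_2 \sim f'^*D_V$ from Definition~\ref{defn--extcont} forces $a_1 C_1 \equiv_V a_2 C_2$, so $\dim N^1(X'/V)_\mathbb{R} \leq 1$; the reverse inequality follows since $\varphi_*D$ is ample, hence nonzero, over $V$.

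The heart of the proof is the following key lemma: \emph{if $G$ is an $\mathbb{R}$-Cartier divisor on $X$ with $G \equiv_V 0$, then $G \sim_\mathbb{R} f^*F_V$ for some $\mathbb{R}$-Cartier divisor $F_V$ on $V$} (and symmetrically for $f'$). To prove it, fix a Cartier divisor $A$ on $X$ that is ample over $V$; for every Cartier divisor $C$ on $X$, the extremal property applied to $C$ and $A$ yields integers $a,b$ not both zero and $D_V$ Cartier on $V$ with $aC - bA \sim f^*D_V$, and intersecting with a curve contracted by $f$ (on which $A$ is positive) forces $a \neq 0$, so $C \sim_\mathbb{Q} (b/a) A + (1/a) f^*D_V$. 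Writing $G = \sum r_i C_i$ as an $\mathbb{R}$-Cartier sum and assembling these relations produces $G \sim_\mathbb{R} \mu A + f^*F_V$ for some $\mu \in \mathbb{R}$ and $\mathbb{R}$-Cartier $F_V$ on $V$; since $G \equiv_V 0$ and $A \not\equiv_V 0$, this forces $\mu = 0$.

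For the second assertion, I apply the key lemma to $A - \lambda D$, where $\lambda \in \mathbb{R}$ is chosen so that $A - \lambda D \equiv_V 0$ (using $\rho(X/V) = 1$), obtaining $A \sim_\mathbb{R} \lambda D + f^*E_V$. Pushing forward via $\varphi$ (which is $\mathbb{R}$-linear on Weil divisors, sends the divisor of a rational function to the divisor of the corresponding function on $X'$, and satisfies $\varphi_*(f^*E_V) = f'^*E_V$ since both Weil divisors agree on the locus where $\varphi$ is an isomorphism, whose complement has codimension $\geq 2$) yields $\varphi_*A \sim_\mathbb{R} \lambda \varphi_*D + f'^*E_V$, which is $\mathbb{R}$-Cartier by the flip assumption that $\varphi_*D$ is $\mathbb{R}$-Cartier. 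For arbitrary $\mathbb{R}$-Cartier $G$ on $X$, the decomposition $G \sim_\mathbb{R} \mu A + f^*F_V$ then gives $\varphi_*G \sim_\mathbb{R} \mu \varphi_*A + f'^*F_V$, which is $\mathbb{R}$-Cartier. The reverse implication follows by symmetry: $\varphi^{-1}$ together with the divisor $-\varphi_*D$ on $X'$ is itself a symmetric flop. The $\mathbb{Q}$-factoriality consequence then follows from the standard fact that an $\mathbb{R}$-Cartier $\mathbb{Q}$-divisor on a normal variety is automatically $\mathbb{Q}$-Cartier.

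For the third assertion, the assignment $G \mapsto \varphi_*G$ descends to a linear map $\varphi_*\colon N^1(X/Z)_\mathbb{R} \to N^1(X'/Z)_\mathbb{R}$: if $G \equiv_Z 0$, intersecting $G \sim_\mathbb{R} \mu A + f^*F_V$ with a curve contracted by $f$ forces $\mu = 0$, and injectivity of $f^*\colon N^1(V/Z)_\mathbb{R} \to N^1(X/Z)_\mathbb{R}$ (standard, via lifting curves through the surjective contraction $f$) then forces $F_V \equiv_Z 0$, so $\varphi_*G \sim_\mathbb{R} f'^*F_V \equiv_Z 0$. The same construction applied to $\varphi^{-1}$ furnishes an inverse, so $\varphi_*$ is an isomorphism, and the equality of Picard numbers is immediate. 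The main technical obstacle is the key lemma: upgrading the integer-coefficient relations provided by the definition of extremal contraction to an $\mathbb{R}$-linear-equivalence statement valid for every $\mathbb{R}$-Cartier divisor. Everything else follows mechanically from that lemma together with the symmetry between $\varphi$ and $\varphi^{-1}$.
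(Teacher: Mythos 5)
Your proof is correct and follows essentially the same route as the paper: the paper's (very terse) argument for the second bullet also rests on writing any $\mathbb{R}$-Cartier $G$ as $rD$ plus a pullback from $V$ via extremality of $f$, pushing forward, and invoking that $\varphi_*D$ is $\mathbb{R}$-Cartier, with the third bullet descending from the second. You have merely made explicit the passage from the Cartier-coefficient relations in Definition~\ref{defn--extcont} to the needed $\mathbb{R}$-linear decomposition (your ``key lemma''), which the paper leaves implicit.
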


\begin{proof}[{\rm \bf Proof}]
The first property is clear. 
To prove the second and the third properties, we apply proof of Lemma \ref{lem--suffcond}.
Pick an $\mathbb{R}$-divisor $G$ on $X$, and suppose that $G$ is $\mathbb{R}$-Cartier. 
As in the proof of Lemma \ref{lem--suffcond}, we can find a real number $r$ such that $G-rD$ is $\mathbb{R}$-linearly equivalent to the pullback of an $\mathbb{R}$-Cartier divisor on $V$. 
Then $\varphi_{*}G-r\varphi_{*}D$ is $\mathbb{R}$-linearly equivalent to the pullback of an $\mathbb{R}$-Cartier divisor on $V$, therefore $\varphi_{*}G$ is $\mathbb{R}$-Cartier. 
The converse can be proved in the same way. 
Thus, the second property holds. 
For the third property, we can easily check from the second property that there are natural linear maps $\varphi_{*}\colon N^{1}(X/Z)_{\mathbb{R}}\to N^{1}(X'/Z)_{\mathbb{R}}$ and $\varphi^{-1}_{*}\colon N^{1}(X'/Z)_{\mathbb{R}}\to N^{1}(X/Z)_{\mathbb{R}}$, and they are inverse linear maps of each other. 
\end{proof}

\end{model}

\begin{log-mmp}
As in \cite[4.9]{fujino-book}, we may run non-$\mathbb{Q}$-factorial log MMP for not necessarily $\mathbb{Q}$-factorial lc pairs. 
We recall some basic properties of non-$\mathbb{Q}$-factorial log MMP. 

\begin{rem}[see also {\cite[Remark 6.1]{hashizumehu}}]\label{rem--mmp}
Let $(X_{1},\Delta_{1})$ be an lc pair, and let $X_{1}\to Z$ be a projective morphism from a normal quasi-projective variety to a quasi-projective variety. 
Let
\begin{equation*}
(X_{1},\Delta_{1})\dashrightarrow (X_{2},\Delta_{2})\dashrightarrow \cdots \dashrightarrow (X_{i},\Delta_{i})\dashrightarrow \cdots
\end{equation*}
be a sequence of steps of a $(K_{X_{1}}+\Delta_{1})$-MMP over $Z$. 
\begin{enumerate}
\item[(1)]
The map $X_{i}\dashrightarrow X_{i+1}$ is a birational contraction for any $i\geq 1$. 
\item[(2)]
By the cone theorem, for any $i$ and any $\mathbb{Q}$-Cartier divisor $D_{i}$ on $X_{i}$, the birational transform $D_{i+1}$ on $X_{i+1}$ is $\mathbb{Q}$-Cartier. 
\item[(3)]
By (2) and the negativity lemma, if the birational map $X_{i}\dashrightarrow X_{i+1}$ is small then it induces a linear map $N^{1}(X_{i}/Z)_{\mathbb{R}}\to N^{1}(X_{i+1}/Z)_{\mathbb{R}}$ by taking the birational transform. 
\item[(4)]
If $X_{i}$ is $\mathbb{Q}$-factorial for some $i \geq 1$, then $X_{j}$ are $\mathbb{Q}$-factorial for all $j> i$. 
\end{enumerate}
\end{rem}

The following lemma follows from construction of log MMP. 

\begin{lem}\label{lem--mmp-basic}
Let $(X,\Delta)$ be an lc pair, and let $X\to Z$ be a projective morphism from a normal quasi-projective variety to a quasi-projective variety. 
Let $(X,\Delta)\dashrightarrow (Y,\Gamma)$ and $(X,\Delta)\dashrightarrow (Y',\Gamma')$ be two sequences of steps of the $(K_{X}+\Delta)$-MMP over $Z$ to log minimal models. 
Then the induced birational map $\phi\colon Y\dashrightarrow Y'$ satisfies the followings.  
\begin{itemize}
\item
$\phi$ is small, 
\item
$\Gamma'=\phi_{*}\Gamma$, and 
\item
there is an open subset $U\subset Y$ such that $\phi$ is an isomorphism on $U$ and all lc centers of $(Y,\Gamma)$ intersect $U$. 
\end{itemize}
\end{lem}

\begin{proof}[{\rm \bf Proof}]
We denote the log MMP $X\dashrightarrow Y$ (resp.~$X\dashrightarrow Y'$) by $\varphi$ (resp.~$\varphi'$). For any prime divisor $P$ over $X$, denote the center of $P$ on $X$ (resp.~$Y$, $Y'$) by $c_{X}(P)$ (resp.~$c_{Y}(P)$, $c_{Y'}(P)$). Let $V\subset X$ (resp.~$V'\subset X$) be the largest open subset on which $\varphi$ (resp.~$\varphi'$) is an isomorphism. Since $K_{Y}+\Gamma$ and $K_{Y'}+\Gamma'$ are nef, by the negativity lemma, we have $a(P,Y,\Gamma)=a(P,Y',\Gamma')$ for any prime divisor $P$ over $Y$. 
By definition of log minimal models, a prime divisor $D$ on $X$ is contracted by $\varphi$ if and only if $a(D,X,\Delta)<a(D,Y,\Gamma)$. 
The same property holds for $a(D,Y',\Gamma')$ and $\varphi'$. 
In particular, $D$ is contracted by $\varphi$ if and only if $D$ is contracted by $\varphi'$. So the induced birational map $\phi =\varphi'\circ\varphi^{-1}\colon Y\dashrightarrow Y'$ is small, and $\Gamma'=\phi_{*}\Gamma$. Pick any prime divisor $P$ over $Y$ with $a(P,Y,\Gamma)=-1$. Then $a(P,X,\Delta)=a(P,Y',\Gamma')=-1$. Construction of log MMP shows that discrepancy of any prime divisor over $X$ strictly increases by the $(K_{X}+\Delta)$-MMP if and only if the center of the prime divisor on $X$ is contained in non-isomorphic locus of the $(K_{X}+\Delta)$-MMP, therefore, the equality $a(P,X,\Delta)=a(P,Y,\Gamma)$ shows $c_{X}(P)\cap V\neq \emptyset$. The same property holds for $a(P,Y',\Gamma')$ and $c_{X}(P)\cap V'$. So we have $c_{X}(P)\cap (V\cap V')\neq \emptyset$. Put $U=\varphi(V\cap V')$. Then $\phi$ is an isomorphism on $U$. Since $c_{X}(P)\cap (V\cap V')\neq \emptyset$ for any prime divisor $P$ over $Y$ with $a(P,Y,\Gamma)=-1$, we see that $U$ intersects all lc centers of $(Y,\Gamma)$. Hence $(Y,\Gamma)$ and $(Y',\Gamma')$ over $Z$ satisfy the three properties of Lemma \ref{lem--mmp-basic}. 
\end{proof}

We close this section with a result of non-$\mathbb{Q}$-factorial log MMP for lc pairs, which plays an important role in this paper. 

\begin{thm}[cf.~{\cite[Theorem 1.7]{hashizumehu}}]\label{thm--mmp}
Let $\pi\colon X\to Z$ be a projective morphism from a normal quasi-projective variety to a quasi-projective variety, and let $(X,\Delta)$ be an lc pair. 
Suppose that $(X,\Delta)$ has a weak lc model over $Z$ in the sense of Definition \ref{defn--minimalmodel}. 
  
Then there is a sequence of birational contractions 
\begin{equation*}
(X,\Delta)=(X_{1},\Delta_{1})\dashrightarrow (X_{2},\Delta_{2})\dashrightarrow \cdots \dashrightarrow (X_{l},\Delta_{l})
\end{equation*}
of a non-$\mathbb{Q}$-factorial $(K_{X}+\Delta)$-MMP over $Z$ that terminates with a log minimal model $(X_{l},\Delta_{l})$ as in Definition \ref{defn--minimalmodel}. 
\end{thm}

\begin{proof}[{\rm \bf Proof}]
We note that weak lc models as in Definition \ref{defn--minimalmodel} are in particular weak lc models in the sense of \cite[Definition 2.6]{has-mmp} and \cite[Section 2]{hashizumehu}. 
If $(X,\Delta)$ has a weak lc model over $Z$ as in Definition \ref{defn--minimalmodel}, by \cite[Remark 2.10]{has-mmp}, we see that $(X,\Delta)$ has a log minimal model over $Z$ in the sense of \cite[Definition 2.6]{has-mmp} and \cite[Section 2]{hashizumehu}. 
Then Theorem \ref{thm--mmp} follows from \cite[Theorem 1.7]{hashizumehu}. 
\end{proof}

\end{log-mmp}

\section{Proofs of main results}\label{sec3}

\begin{prop}\label{prop--smallmodif}
Let $X\to Z$ and $X'\to Z$ be two projective morphisms from normal quasi-projective varieties to a quasi-projective variety $Z$, and let $(X,\Delta)$ and $(X',\Delta')$ be lc pairs such that $K_{X}+\Delta$ and $K_{X'}+\Delta'$ are nef over $Z$. 
Suppose that there is a small birational map $\phi\colon X\dashrightarrow X'$ over $Z$ such that 
\begin{itemize}
\item
$\Delta'=\phi_{*}\Delta$, and
\item
there is an open subset $U\subset X$ such that $\phi$ is an isomorphism on $U$ and all lc centers of $(X,\Delta)$ intersect $U$. 
\end{itemize}

Then, there exists a projective small birational morphism $\psi\colon\overline{X}\to X$ from a normal quasi-projective variety such that 
\begin{itemize}
\item
$\psi$ is an isomorphism over $U$, and  
\item 
there is an ample $\mathbb{R}$-divisor $A'\geq 0 $ on $X'$ such that $(X',\Delta'+A')$ is lc and if we set $\overline{A}$ as the birational transform of $A'$ on $\overline{X}$, then $K_{\overline{X}}+\psi^{-1}_{*}\Delta+\overline{A}$ is $\mathbb{R}$-Cartier and the pair $(\overline{X},\psi^{-1}_{*}\Delta+\overline{A})$ is lc.  
\end{itemize}
\end{prop}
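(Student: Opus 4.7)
The plan is to construct $\overline{X}$ as the output of a relative $(K_W+\Delta_W+A_W)$-MMP over $X$ on a common log resolution $W$ of $\phi$, with $A_W$ tracking the strict transform of a suitably chosen ample divisor $A'$ on $X'$.

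First, I would choose an effective ample $\mathbb{R}$-divisor $A'\geq 0$ on $X'$ such that $(X',\Delta'+A')$ is lc and $\Supp A'$ contains no lc center of $(X',\Delta')$; this is possible by taking $A'$ a small general rational multiple of a very ample divisor. With this choice, every lc center of $(X',\Delta'+A')$ is an lc center of $(X',\Delta')$, and, via $\phi^{-1}$ (which is defined on a neighbourhood of any such center since lc centers meet $U$), corresponds to an lc center of $(X,\Delta)$. Next, take a common log resolution $p\colon W\to X$, $p'\colon W\to X'$ which simultaneously log resolves $(X,\Delta)$ and $(X',\Delta'+A')$. Since $\phi$ is small, the $p$-exceptional and $p'$-exceptional prime divisors coincide; call them $E_1,\ldots,E_k$. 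Put
\begin{equation*}
\Delta_W = p^{-1}_*\Delta + \sum_{j=1}^{k} E_j, \qquad A_W = (p')^{-1}_*A'.
\end{equation*}
Then $(W,\Delta_W+A_W)$ is $\mathbb{Q}$-factorial dlt: the support is snc on the smooth $W$, the coefficients on strict transforms of divisors from $X$ lie in $[0,1]$ by lc-ness of $(X',\Delta'+A')$ together with the smallness of $\phi$, and the exceptional coefficients are $1$. Moreover, $F := K_W+\Delta_W - p^*(K_X+\Delta) = \sum_j (1+a(E_j,X,\Delta))\, E_j$ is effective and $p$-exceptional by the lc property of $(X,\Delta)$.

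The core step is to run a $(K_W+\Delta_W+A_W)$-MMP over $X$ and show it terminates with a small model $\psi\colon \overline{X}\to X$. Termination would be deduced from \cite[Theorem 1.7]{hashizumehu} once a weak lc model of the $\mathbb{Q}$-factorial dlt pair $(W,\Delta_W+A_W)$ over $X$ is exhibited; such a weak lc model can be produced by transporting the data of $(X',\Delta'+A')$ across $\phi^{-1}$, using crucially that $K_{X'}+\Delta'+A'$ is nef over $Z$ and that the lc centers meeting $U$ match up under $\phi$. That $\psi$ is small, i.e.\ that every $E_j$ is contracted during the MMP, is the main technical point; I would verify it via the negativity lemma applied to the effective $p$-exceptional divisor $F$ (noting $A_W$ is non-exceptional over $X$), producing, in each $E_j$ not yet contracted, a curve along which $K_W+\Delta_W+A_W$ has strictly negative degree over $X$. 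The birational transform $\overline{A}$ of $A_W$ on $\overline{X}$ agrees with the birational transform of $A'$ on $\overline{X}$, and by construction $K_{\overline{X}}+\psi^{-1}_*\Delta+\overline{A}$ is $\mathbb{R}$-Cartier with $(\overline{X},\psi^{-1}_*\Delta+\overline{A})$ lc.

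The main obstacles are (i) establishing termination, and (ii) ensuring $\psi$ is an isomorphism over $U$. For (i), one must furnish the weak lc model over $X$ required by \cite[Theorem 1.7]{hashizumehu}; the construction uses $\phi$ together with the hypothesis on lc centers in an essential way. For (ii), above $U$ the hypothesis that $\phi|_U$ is an isomorphism implies that $A'|_{\phi(U)}$ pulls back to an $\mathbb{R}$-Cartier divisor on $U$, so $(X,\Delta)|_U$ together with this pullback already furnishes a weak lc model for $(W,\Delta_W+A_W)|_{p^{-1}(U)}$ over $U$, forcing the MMP to act trivially over $U$; hence $\psi$ is an isomorphism over $U$.
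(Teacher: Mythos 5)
Your high-level strategy (log resolution of $(X,\Delta)$, track the strict transform of an ample $A'$ from $X'$, and run an MMP over $X$ to produce $\overline{X}$) is the same as the paper's, but two of the technical fill-ins contain genuine gaps that the paper's argument is specifically designed to avoid.

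First, the smallness of $\psi$: the negativity-lemma argument does not go through because $A_W$ is not $p$-exceptional. Writing $K_W+\Delta_W+A_W=p^*(K_X+\Delta)+F+A_W$ with $F\geq 0$ exceptional, after the MMP the pushforward of $F+A_W$ is $\psi$-nef, but since $A_W$ is not exceptional this gives no control on $F$; you cannot conclude that the $E_j$ are contracted. Worse, if your MMP on the smooth $W$ did terminate with a model small over $X$, that model would automatically be a small $\mathbb{Q}$-factorialization of $X$ — and under the hypotheses of the proposition, $X$ need not admit one (cf.\ Example~\ref{exam--3}, where $X_V$ has no small $\mathbb{Q}$-factorialization). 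The paper sidesteps this by not asking for a log minimal model at all: it takes the \emph{log canonical model} of $(Y',\Gamma'+tA_{Y'})$ over $X$, which is unique, not a priori $\mathbb{Q}$-factorial, and over $U$ is automatically $U$ itself; smallness is then deduced from the fact that every $f'$-exceptional prime divisor is an lc center of $(Y',\Gamma'+tA_{Y'})$ (for suitable $t\in(0,t_0)$) and hence meets $f'^{-1}(U)$, forcing its contraction.

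Second, the termination input: you propose to apply \cite[Theorem 1.7]{hashizumehu} after ``transporting the data of $(X',\Delta'+A')$ across $\phi^{-1}$'' to get a weak lc model of $(W,\Delta_W+A_W)$ \emph{over $X$}. But $X'$ does not map to $X$ over $Z$ (only the rational map $\phi^{-1}$ is available), so $(X',\Delta'+A')$ is not a weak lc model over $X$, and it is unclear what the transported object is. The paper instead invokes \cite[Theorem 1.2]{has-mmp} to reduce the existence of a good minimal model over $X$ to existence over $U$, where the pair is a crepant pullback of $(U,\Delta|_U+t\phi^{-1}_{*}A'|_U)$ and is thus already its own good minimal model; it also chooses the scaling $t$ so that the lc centers of $(Y',\Gamma'+tA_{Y'})$ coincide with those of $(Y',\Gamma')$, which is needed both for \cite[Theorem 1.2]{has-mmp} and for the smallness argument. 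You should replace the run-an-MMP-to-a-small-model step by the log canonical model construction, and replace the weak-lc-model-by-transport step by the reduction to $U$ via \cite[Theorem 1.2]{has-mmp}.
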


\begin{proof}[{\rm \bf Proof}]
Let $f\colon Y \to X$ be a log resolution of $(X,\Delta)$ such that the induced birational map $g\colon Y\dashrightarrow X'$ is a morphism. 
Let $\Gamma$ be the sum of $f^{-1}_{*}\Delta$ and the reduced $f$-exceptional divisor. 
Let $A'\geq0$ be an ample $\mathbb{R}$-divisor on $X'$ such that the pairs $(X',\Delta'+A')$ and $(Y,\Gamma+g^{*}A')$ are lc. 
By running a $(K_{Y}+\Gamma)$-MMP over $X$ with scaling of an ample divisor, we construct a dlt blow-up $f'\colon (Y',\Gamma')\to (X,\Delta)$. 
Let $A_{Y'}$ be the birational transform of $g^{*}A'$ on $Y'$. 
Then the map $Y\dashrightarrow Y'$ is a sequence of steps of a $(K_{Y}+\Gamma+t_{0}g^{*}A')$-MMP for some $t_{0}\in (0,1)$. 
So there is $t\in (0,t_{0})$ such that $(Y',\Gamma'+tA_{Y'})$ is lc and all lc centers of the pair are lc centers of $(Y',\Gamma')$. 
Then all lc centers of $(Y',\Gamma'+tA_{Y'})$ intersect $f'^{-1}(U)$. 

We show that $(Y',\Gamma'+tA_{Y'})$ has the log canonical model over $X$. 
It is sufficient to prove that $(Y',\Gamma'+tA_{Y'})$ has a good minimal model over $X$. 
We set $U_{Y'}=f'^{-1}(U)$. 
By \cite[Theorem 1.2]{has-mmp}, we only need to prove that the pair
$\bigl(U_{Y'}, (\Gamma'+tA_{Y'})|_{U_{Y'}}\bigr)$ has a good minimal model over $U$. 
Since $\phi\colon X\dashrightarrow X'$ is isomorphic on $U$, the divisor $\phi^{-1}_{*}A'|_{U}$ is $\mathbb{R}$-Cartier and $A_{Y'}|_{U_{Y'}}$ is equal to the pullback of $\phi^{-1}_{*}A'|_{U}$ to $U_{Y'}$. 
Then we have
\begin{equation*}\tag{$*$}
K_{U_{Y'}}+(\Gamma'+tA_{Y'})|_{U_{Y'}}=f'|_{U_{Y'}}^{*}(K_{U}+\Delta|_{U}+t\phi^{-1}_{*}A'|_{U}),
\end{equation*} 
thus the pair $\bigl(U_{Y'}, (\Gamma'+tA_{Y'})|_{U_{Y'}}\bigr)$ itself is a good minimal model of $\bigl(U_{Y'}, (\Gamma'+tA_{Y'})|_{U_{Y'}}\bigr)$ over $U$.
Therefore, $(Y',\Gamma'+tA_{Y'})$ has a good minimal model over $X$. 
In this way, we see that $(Y',\Gamma'+tA_{Y'})$ has the log canonical model over $X$. 

Let $(Y',\Gamma'+tA_{Y'})\dashrightarrow (\overline{X}, \Delta_{\overline{X}}+tA_{\overline{X}})$ be the birational contraction over $X$ to the log canonical model, where $\Delta_{\overline{X}}$ (resp.~$A_{\overline{X}}$) is the birational transform of $\Gamma'$ (resp.~$A_{Y'}$) on $\overline{X}$. 
Let $\psi\colon \overline{X}\to X$ be the induced birational  morphism. 
We prove that $\psi$ is isomorphic over $U$ and small. 
By ($*$), the restriction of $K_{\overline{X}}+\Delta_{\overline{X}}+tA_{\overline{X}}$ to $\psi^{-1}(U)$ is numerically trivial over $U$, so $\psi$ is an isomorphism over $U$. 
This also implies that all $f'$-exceptional prime divisors $E$ on $Y'$ are contracted by $Y'\dashrightarrow \overline{X}$ because 
$-1\leq a(E,Y',\Gamma'+tA_{Y'})\leq a(E,Y',\Gamma')=-1$ 
 and all $E$ intersect $f'^{-1}(U)$ by our choice of $t$. 
In this way, we see that $\psi$ is small. 
From these facts, we see that $\psi$ is isomorphic over $U$ and small. 

Finally, we prove that $\psi$ is the desired birational morphism. 
But it is obvious from construction. 
Indeed, $\psi$ is small and it satisfies the first condition of Proposition \ref{prop--smallmodif}. 
Moreover the divisor $tA'$ satisfies the second condition of Proposition \ref{prop--smallmodif}. 
\end{proof}

\begin{lem}\label{lem--picard}
Let $X\to Z$ be a projective morphism from a normal variety to a variety. 
Let $W\to Z$ be a projective morphism from a $\mathbb{Q}$-factorial variety to $Z$ such that there is a birational contraction $W\dashrightarrow X$ over $Z$.  
Then $\rho(W/Z)\geq \rho(X/Z)$. 
\end{lem}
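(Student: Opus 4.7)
The plan is to pick a common resolution of the birational contraction $\phi\colon W\dashrightarrow X$—projective birational morphisms $p\colon V\to W$ and $q\colon V\to X$ from a normal variety $V$—and to compare, inside $N^{1}(V/Z)_{\mathbb{R}}$, the pullback images $p^{*}N^{1}(W/Z)_{\mathbb{R}}$ and $q^{*}N^{1}(X/Z)_{\mathbb{R}}$ with the subspace $E$ spanned by the numerical classes of $p$-exceptional prime divisors. By the projection formula both $p^{*}$ and $q^{*}$ are injective, so $\dim p^{*}N^{1}(W/Z)_{\mathbb{R}}=\rho(W/Z)$ and $\dim q^{*}N^{1}(X/Z)_{\mathbb{R}}=\rho(X/Z)$.

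Two ingredients do most of the work. First, since $\phi$ is a birational contraction, every $p$-exceptional prime divisor on $V$ is also $q$-exceptional: otherwise such a prime $F$ would have a divisorial image $q(F)\subset X$ whose birational transform on $W$ equals $p(F)$, a set of codimension $\geq 2$, contradicting the hypothesis that $\phi^{-1}$ contracts no divisors. Second, because $W$ is $\mathbb{Q}$-factorial, for every $\mathbb{R}$-Cartier divisor $D'$ on $V$ the pushforward $p_{*}D'$ is $\mathbb{R}$-Cartier on $W$ and the difference $D'-p^{*}p_{*}D'$ is $p$-exceptional and $\mathbb{R}$-Cartier. Consequently $N^{1}(V/Z)_{\mathbb{R}}=p^{*}N^{1}(W/Z)_{\mathbb{R}}+E$, giving
\[
\rho(V/Z)\leq \rho(W/Z)+\dim E.
\]

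The main step is to show $q^{*}N^{1}(X/Z)_{\mathbb{R}}\cap E=0$. Suppose $D$ on $X$ satisfies $q^{*}D\equiv_{Z}G$ with $G$ supported on $p$-exceptional primes; by the first ingredient $G$ is a $q$-exceptional $\mathbb{R}$-Cartier divisor. Moreover $q^{*}D\equiv_{X}0$ trivially, so $G\equiv_{X}0$; both $G$ and $-G$ are then $q$-exceptional and $q$-nef, and the negativity lemma applied twice forces $G=0$. Hence $q^{*}D\equiv_{Z}0$, and the injectivity of $q^{*}$ gives $D\equiv_{Z}0$. Combining, $\rho(X/Z)+\dim E=\dim\bigl(q^{*}N^{1}(X/Z)_{\mathbb{R}}+E\bigr)\leq \rho(V/Z)\leq \rho(W/Z)+\dim E$, yielding $\rho(X/Z)\leq \rho(W/Z)$. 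I expect the main obstacle to be identifying $G$ as $q$-exceptional—this is the one place where the birational-contraction hypothesis is essential—after which the negativity lemma handles the remainder of the argument.
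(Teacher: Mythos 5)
Your proof is correct, but it is packaged differently from the paper's. The paper defines a single linear map $N^{1}(X/Z)_{\mathbb{R}}\to N^{1}(W/Z)_{\mathbb{R}}$, $D\mapsto f_{*}g^{*}D$, through a common resolution $f\colon\overline{W}\to W$, $g\colon\overline{W}\to X$: $\mathbb{Q}$-factoriality of $W$ makes $f_{*}g^{*}D$ $\mathbb{R}$-Cartier, the negativity lemma shows the map is well defined on numerical classes over $Z$, and the birational-contraction hypothesis (birational transforms of $f_{*}g^{*}D_{i}$ on $X$ recover $D_{i}$) gives injectivity, whence $\rho(W/Z)\geq\rho(X/Z)$ directly. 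You never build a map into $N^{1}(W/Z)_{\mathbb{R}}$; instead you work inside $N^{1}(V/Z)_{\mathbb{R}}$ of the common resolution, writing it as $p^{*}N^{1}(W/Z)_{\mathbb{R}}+E$ via $\mathbb{Q}$-factoriality of $W$ (the decomposition $D'=p^{*}p_{*}D'+(D'-p^{*}p_{*}D')$), proving the transversality $q^{*}N^{1}(X/Z)_{\mathbb{R}}\cap E=0$ from the two facts that $p$-exceptional primes are $q$-exceptional (this is exactly where the birational-contraction hypothesis enters, as you note) and the negativity lemma over $X$, and then counting dimensions. The ingredients are the same ($\mathbb{Q}$-factoriality of $W$, the exceptional-divisor comparison, negativity), but your arrangement trades the paper's check that push--pull descends to numerical classes for the subspace bookkeeping with $E$; both are clean, and yours is arguably more transparent about where each hypothesis is used. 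One small point: for the classes of the $p$-exceptional prime divisors to lie in $N^{1}(V/Z)_{\mathbb{R}}$ you need them to be $\mathbb{R}$-Cartier, so you should take $V$ smooth (or at least $\mathbb{Q}$-factorial) rather than merely normal; this is harmless since a common resolution can always be chosen smooth, but as written ``normal'' does not suffice for your definition of $E$ or for applying the negativity lemma to the $\mathbb{R}$-Cartier divisor $G$.
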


\begin{proof}[{\rm \bf Proof}]
Take a common resolution $f \colon \overline{W}\to W$ and $g \colon \overline{W}\to X$ of the map $W\dashrightarrow X$. 
For any $\mathbb{R}$-Cartier divisors $D_{1}$ and $D_{2}$ on $X$, if $D_{1}- D_{2}\equiv_{Z}0$ then $g^{*}D_{1}-g^{*}D_{2}\equiv_{Z}0$, and $f_{*}g^{*}D_{1}- f_{*}g^{*}D_{2}\equiv_{Z}0$ by the negativity lemma. 
Thus, we can define a linear map $f_{*}g^{*}\colon N^{1}(X/Z)_{\mathbb{R}}\to N^{1}(W/Z)_{\mathbb{R}}$ by $D\mapsto f_{*}g^{*}D$. 
For any $\mathbb{R}$-Cartier divisors $D_{1}$ and $D_{2}$ on $X$, if $f_{*}g^{*}D_{1}\equiv_{Z} f_{*}g^{*}D_{2}$ then we have $D_{1}\equiv_{Z} D_{2}$ since $D_{1}$ and $D_{2}$ are the birational transforms of $f_{*}g^{*}D_{1}$ and $f_{*}g^{*}D_{2}$ on $X$, respectively. 
In this way, we see that the linear map $f_{*}g^{*}\colon N^{1}(X/Z)_{\mathbb{R}}\to N^{1}(W/Z)_{\mathbb{R}}$ is injective, so $\rho(W/Z)\geq \rho(X/Z)$. 
\end{proof}

\begin{lem}\label{lem--extcont}
Let $(X,\Delta)$ be an lc pair and $\psi\colon X' \to X$ a projective small birational morphism of normal quasi-projective varieties.
Suppose that there exists an open subset $U\subset X$ over which $\psi$ is an isomorphism and all lc centers of $(X,\Delta)$ intersect $U$. 

Then the followings hold true.
\begin{itemize}
\item
$R^{p}\psi_{*}\mathcal{O}_{X'}=0$ for any $p>0$,  
\item
$\psi$ is an isomorphism or a composition of extremal contractions, and  
\item
for any Cartier divisor $D'$ on $X'$, if $D'\equiv_{X}0$ then there is a Cartier divisor $D$ on $X$ such that $D'\sim \psi^{*}D$. 
In particular, $\psi_{*}D'$ is Cartier. 
\end{itemize}
\end{lem}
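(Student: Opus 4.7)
The plan is to apply Proposition \ref{prop--smallmodif} to the birational map $\phi=\psi^{-1}\colon X\dashrightarrow X'$ over $Z=X$, identify $X'$ with the small modification it produces, and extract the three assertions from the resulting auxiliary ample divisor and $\psi$-ample lc structure. As preparation, since $\psi$ is small and $K_X+\Delta$ is $\mathbb{R}$-Cartier, the divisor $\psi^*(K_X+\Delta)$ coincides as a Weil divisor with $K_{X'}+\psi^{-1}_*\Delta$ (their difference is a Weil divisor supported in the codimension-$\geq 2$ exceptional locus and hence is zero), so $K_{X'}+\psi^{-1}_*\Delta$ is $\mathbb{R}$-Cartier, $(X',\psi^{-1}_*\Delta)$ is crepant to $(X,\Delta)$ and therefore lc, and its log canonical divisor is numerically trivial (hence nef) over $X$. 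Applying Proposition \ref{prop--smallmodif} with $\pi=\mathrm{id}_X$ and $\pi'=\psi$ then supplies a small birational morphism $\sigma\colon\overline{X}\to X$, isomorphic over $U$, together with an ample $\mathbb{R}$-divisor $A'\geq 0$ on $X'$ such that both $(X',\psi^{-1}_*\Delta+A')$ and $(\overline{X},\sigma^{-1}_*\Delta+\overline{A})$ (with $\overline{A}$ the birational transform of $A'$) are lc with $\mathbb{R}$-Cartier log canonical divisors. On both sides the log canonical divisor equals the (trivial over $X$) small pullback of $K_X+\Delta$ plus the birational transform of the ample $A'$, hence is ample over $X$; thus $X'$ and $\overline{X}$ are both relative log canonical models over $X$ of crepantly birational lc pairs, so the induced birational map $\overline{X}\dashrightarrow X'$ is an isomorphism by the uniqueness of log canonical models. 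I identify $X'=\overline{X}$ and equip $X'$ with the lc boundary $B:=\psi^{-1}_*\Delta+A'$ satisfying $K_{X'}+B$ ample over $X$ and $A'$ ample.

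With this identification in hand I derive the three conclusions. For $(1)$ I apply a Koll\'ar--Fujino-type relative vanishing for lc pairs to a perturbation $(X',\psi^{-1}_*\Delta+\epsilon A')$: the hypothesis that every lc center of $(X,\Delta)$ meets $U$ passes crepantly to the statement that every lc center of $(X',\psi^{-1}_*\Delta)$ meets $\psi^{-1}(U)$, providing the log-bigness of $A'$ on each non-klt stratum needed to conclude $R^p\psi_*\mathcal{O}_{X'}=0$ for $p>0$. For $(2)$, either $\rho(X'/X)=0$ and $\psi$ is an isomorphism, or I run a relative log MMP on a $\mathbb{Q}$-factorial dlt blow-up of $X'$ with a small negative perturbation by $A'$ in the boundary (to create $(K+B)$-negative extremal rays), invoking \cite[Theorem 1.7]{hashizumehu} to descend the MMP to an extremal factorization $X'=X_0\to X_1\to\cdots\to X_m=X$; each step is small because $\psi$ is. For $(3)$, iteratively applying Definition \ref{defn--extcont} along the factorization gives $MD'\sim\psi^*D$ for some positive integer $M$ and a Cartier $D$ on $X$; then the vanishing $R^1\psi_*\mathcal{O}_{X'}=0$ from $(1)$, combined with the rational chain-connectedness of the fibers of each extremal step, removes the multiplicity $M$ to give $D'\sim\psi^*D_0$ for a Cartier $D_0$ on $X$, whence $\psi_*D'=D_0$ is Cartier.

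The main obstacle is assertion $(2)$: since $K_{X'}+B$ is $\psi$-ample, there are no $(K_{X'}+B)$-negative rays of $\overline{NE}(X'/X)$ to contract directly by the standard cone and contraction theorem. The remedy is to pass to a $\mathbb{Q}$-factorial dlt blow-up of $X'$, introduce a slightly negative perturbation by $A'$ in the boundary so that the relevant extremal rays become $(K+B')$-negative, run the non-$\mathbb{Q}$-factorial log MMP of \cite[Theorem 1.7]{hashizumehu} there, and descend the extremal steps to $X'\to X$, carefully tracking smallness throughout so as to obtain a factorization by genuinely small extremal contractions in the sense of Definition \ref{defn--extcont}.
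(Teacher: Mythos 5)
Your proposal runs into a sign obstruction that you partially diagnose but do not actually resolve. Applying Proposition~\ref{prop--smallmodif} (in the form you suggest) only produces a divisor $A'$ that is \emph{ample} on $X'$; since $K_{X'}+\psi^{-1}_{*}\Delta=\psi^{*}(K_{X}+\Delta)$ is trivial over $X$, any perturbation of the shape $K_{X'}+\psi^{-1}_{*}\Delta+\epsilon A'$ is $\psi$-\emph{ample}, so $-(K_{X'}+\psi^{-1}_{*}\Delta+\epsilon A')$ is anti-ample over $X$. This is exactly the wrong sign both for the relative vanishing \cite[Theorem~5.6.4]{fujino-book} you want in assertion~(1), and for producing $(K+B)$-negative extremal rays of $\overline{NE}(X'/X)$ in assertion~(2). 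The paper's proof solves this with a small but essential trick your proposal lacks: pick an ample $H$ on $X$ and take a \emph{general effective} $H'\sim_{\mathbb{R}}s\psi^{*}H-A'$; because $\psi$ is an isomorphism over $U$ and every lc center of $(X',\psi^{-1}_{*}\Delta)$ meets $\psi^{-1}(U)$, one can choose $H'$ so that $\Supp H'$ avoids all lc centers, and then $G':=tH'\geq 0$ satisfies that $-G'\equiv_{X}tA'$ is ample over $X$ while $(X',\psi^{-1}_{*}\Delta+G')$ stays lc. With $-(K_{X'}+\psi^{-1}_{*}\Delta+G')$ ample over $X$, both the relative vanishing and the cone/contraction theorem apply directly, and one proceeds by induction on $\rho(X'/X)$.

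Your proposed remedy --- pass to a dlt blow-up of $X'$ and ``introduce a slightly negative perturbation by $A'$ in the boundary'' --- does not make literal sense: subtracting a positive multiple of $A'$ from the boundary destroys effectivity and hence the lc condition, so the cone theorem and vanishing are not available on that side either. What is actually needed is precisely the substitution of $-\epsilon A'$ by an effective $\mathbb{R}$-linearly equivalent (relative over $X$) divisor whose support misses the lc strata; that replacement is where the hypothesis on $U$ is used, and it is the step your proof plan never carries out. Also, your route to assertion~(3) --- first obtain $MD'\sim\psi^{*}D$ and then ``remove the multiplicity $M$'' using $R^{1}\psi_{*}\mathcal{O}_{X'}=0$ and rational chain-connectedness of fibers --- is weaker and potentially circular for lc singularities; the paper instead quotes \cite[Theorem~4.5.2~(4)(iii)]{fujino-book}, which directly says that a numerically trivial (over the base) Cartier divisor on the source of an lc extremal contraction is linearly equivalent to a pullback of a Cartier divisor, with no multiplicity loss, which then peels off one extremal contraction at a time by the same induction. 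Finally, the detour through Proposition~\ref{prop--smallmodif} is also structurally unnecessary here: that proposition runs an MMP to \emph{build} a small model of $X$, whereas in the present Lemma the small morphism $\psi$ is already given as data, so all you need is the local perturbation argument, not a new small model.
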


\begin{proof}[{\rm \bf Proof}]
Since $\psi$ is small, we may assume that codimension of $X'\setminus \psi^{-1}(U)$ in $X'$ is at least two. 
First, we find an $\mathbb{R}$-Cartier divisor $G'\geq0$ on $X'$ such that $-G'$ is ample over $X$ and $(X',\psi^{-1}_{*}\Delta+G')$ is lc. 
We note that $K_{X'}+\psi^{-1}_{*}\Delta=\psi^{*}(K_{X}+\Delta)$, so $(X', \psi^{-1}_{*}\Delta)$ is lc and all lc centers of $(X', \psi^{-1}_{*}\Delta)$ intersect $\psi^{-1}(U)$. 
Pick an ample $\mathbb{R}$-divisor $A'$ on $X'$, and pick an ample $\mathbb{R}$-divisor $H$ on $X$. 
Since $\psi$ is an isomorphism over $U$, we can find $s>0$ such that the divisor $(s\psi^{*}H-A')|_{\psi^{-1}(U)}$ is ample. 
Since all lc centers of $(X', \psi^{-1}_{*}\Delta)$ intersect $\psi^{-1}(U)$, by taking the closure of a general member of the $\mathbb{R}$-linear system of $(s\psi^{*}H-A')|_{\psi^{-1}(U)}$, we get an effective $\mathbb{R}$-divisor $ H'\sim_{\mathbb{R}}s\psi^{*}H-A'$ such that ${\rm Supp}H'$ contains no lc centers of $(X', \psi^{-1}_{*}\Delta)$. 
Then $-H'$ is ample over $X$ and there is $t>0$ such that $(X',\psi^{-1}_{*}\Delta+tH')$ is lc. 
Therefore $G':=tH'$ is the desired divisor. 

From now on, we prove three assertions of Lemma \ref{lem--extcont}. 
By construction, the divisor $-(K_{X'}+\psi^{-1}_{*}\Delta+G')$ is ample over $X$. 
Since $(X', \psi^{-1}_{*}\Delta+G')$ is lc, we have $R^{p}\psi_{*}\mathcal{O}_{X'}=0$ for any $p>0$ by \cite[Theorem 5.6.4]{fujino-book}. 
We prove the second and the third assertion of Lemma \ref{lem--extcont} by induction on $\rho(X'/X)$. 
If $\rho(X'/X)=0$, then $\psi$ is an isomorphism and the third assertion is clear. 
So suppose $\rho(X'/X)>0$. 
The pair $(X',\psi^{-1}_{*}\Delta+G')$ is lc and the divisor $K_{X'}+\psi^{-1}_{*}\Delta+G'$ is in particular not nef over $X$. 
By the cone and contraction theorem (\cite[Theorem 4.5.2]{fujino-book}), there is an extremal contraction $f \colon X'\to X''$ over $X$. 
Furthermore, \cite[Theorem 4.5.2 (4) (iii)]{fujino-book} shows that for any Cartier divisor $D'$ on $X'$, if $D'\equiv_{X}0$ then there is a Cartier divisor $D''$ on $X''$ such that $D'\sim f^{*}D''$ and $D''\equiv_{X}0$.  
Since the morphism $\psi\colon X' \to X$ is an isomorphism over $U$ and  small, so is the morphism $X''\to X$. 
We denote $X''\to X$ by $g$. 
It is clear that $\rho(X''/X)<\rho(X'/X)$, therefore we can apply the induction hypothesis to $g$.
In this way, we see that $\psi$ is a composition of extremal contractions, and there is a Cartier divisor $D$ on $X$ such that $D''\sim g^{*}D$. 
Then $D'\sim f^{*}D''\sim f^{*}g^{*}D=\psi^{*}D$. 
Thus the second and the third statement hold.
\end{proof}

\begin{thm}\label{thm--flop}
Let $\pi\colon X\to Z$ and $\pi'\colon X'\to Z$ be projective morphisms from normal quasi-projective varieties to a quasi-projective variety $Z$, and let $(X,\Delta)$ and $(X',\Delta')$ be lc pairs such that $K_{X}+\Delta$ and $K_{X'}+\Delta'$ are nef over $Z$. 
Suppose that there is a small birational map $\phi\colon X\dashrightarrow X'$ over $Z$ such that 
\begin{itemize}
\item
$\Delta'=\phi_{*}\Delta$, and
\item
there is an open subset $U\subset X$ such that $\phi$ is an isomorphism on $U$ and all lc centers of $(X,\Delta)$ intersect $U$. 
\end{itemize}

Then, there are projective small birational morphisms $f\colon \overline{X}\to X$ and $f'\colon\overline{X}'\to X'$ from normal quasi-projective varieties such that $f$ and $f'$ are compositions of extremal contractions and the induced birational map $f'^{-1}\circ \phi \circ f\colon \overline{X} \dashrightarrow \overline{X}'$ is a composition of symmetric flops for $K_{\overline{X}}+f^{-1}_{*}\Delta$ over $Z$. 
\begin{equation*}
\xymatrix@C=21pt
{
\overline{X}\ar@{}[r]|*{=}&\overline{X}_{0}\ar@{-->}^{\varphi_{0}}[rr]\ar[dr]_{g_{0}}&&\overline{X}_{1}\ar@{-->}[r]\ar[dl]^{g'_{0}}&\cdots \ar@{-->}[r]&\overline{X}_{i}\ar@{-->}^{\varphi_{i}}[rr]\ar[dr]_{g_{i}}&&\overline{X}_{i+1}\ar@{-->}[r]\ar[dl]^{g'_{i}}&\cdots \ar@{-->}[r]&\overline{X}_{l}\ar@{}[r]|*{=}&\overline{X}'\\
&&V_{0}&&&&V_{i}
}
\end{equation*}
Furthermore, the following two conditions hold true.
\begin{itemize}
\item[(a)]
$f$ and $f'$ are isomorphisms over $U$ and $\phi(U)$ respectively, and
\item[(b)]
for each $i$ and morphisms $g_{i}\colon\overline{X}_{i}\to V_{i}$ and $g'_{i}\colon \overline{X}_{i+1}\to V_{i}$, there are $\mathbb{R}$-divisors $\overline{B}_{i}$ and $\overline{B}_{i+1}$ on $\overline{X}_{i}$ and $\overline{X}_{i+1}$ respectively such that $(\overline{X}_{i},\overline{B}_{i})$ and $(\overline{X}_{i+1},\overline{B}_{i+1})$ are lc and $-(K_{\overline{X}_{i}}+\overline{B}_{i})$ and $-(K_{\overline{X}_{i+1}}+\overline{B}_{i+1})$ are ample over $V_{i}$. 
\end{itemize}
\end{thm}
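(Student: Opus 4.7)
The plan is to use Proposition \ref{prop--smallmodif} to construct $\overline{X}$, then to realize the induced birational map $\overline{X}\dashrightarrow X'$ as a sequence of flops via an appropriately chosen MMP and take $\overline{X}'$ to be the end of this MMP; smallness of $f'\colon\overline{X}'\to X'$, isomorphism over $\phi(U)$, and the symmetric flop structure will then follow from properties of the MMP together with Lemma \ref{lem--extcont}.

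First apply Proposition \ref{prop--smallmodif} to $(X,\Delta)$, $(X',\Delta')$ and $\phi$ to obtain a projective small birational morphism $f\colon\overline{X}\to X$ which is an isomorphism over $U$, together with an ample $\mathbb{R}$-divisor $A'\geq 0$ on $X'$ such that $(X',\Delta'+A')$ and $(\overline{X},f^{-1}_{*}\Delta+\overline{A})$ are lc, where $\overline{A}$ is the birational transform of $A'$. Lemma \ref{lem--extcont} applied to $f$ with the pair $(X,\Delta)$ gives that $f$ is a composition of extremal contractions. For $t>0$ sufficiently small, run a $(K_{\overline{X}}+f^{-1}_{*}\Delta+t\overline{A})$-MMP over $Z$ with scaling of a general ample divisor. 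For existence and termination, pass to a $\mathbb{Q}$-factorial dlt blow-up of the pair; every exceptional divisor of the blow-up is contracted by the induced map to $X'$ (both $f$ and $\phi$ are small), and $(X',\Delta'+tA')$ is a weak lc model since its log canonical divisor is ample over $Z$, so by \cite[Theorem 1.7]{hashizumehu} the MMP terminates at a log minimal model. Set $\overline{X}'$ to be this end and $f'\colon\overline{X}'\to X'$ the induced morphism to the ample model. Smallness of $\overline{X}\dashrightarrow X'$ forces the MMP to contract no divisor, so $f'$ is small. Over $f^{-1}(U)$ the MMP divisor pulls back from the ample divisor $(K_{X'}+\Delta'+tA')|_{\phi(U)}$, hence no MMP step touches this locus and $f'$ is an isomorphism over $\phi(U)$. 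Lemma \ref{lem--extcont} applied to $f'$ with $(X',\Delta')$ then makes $f'$ a composition of extremal contractions, giving condition (a).

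The central technical claim is that each MMP step is a $(K_{\overline{X}}+f^{-1}_{*}\Delta)$-trivial flop. Inductively the natural morphism $\overline{X}_{i}\to X$ is small: a $(K+\Delta)$-trivial extremal contraction $\overline{X}_{i}\to V_{i}$ makes the morphism to $X$ factor through $V_{i}$, and the flip then gives $\overline{X}_{i+1}\to X$. Thus $K_{\overline{X}_{i}}+\Delta_{i}$ is the pullback of the nef divisor $K_{X}+\Delta$, hence nef. For a contracted ray $R$ we have $(K_{\overline{X}_{i}}+\Delta_{i})\cdot R\geq 0$ while $(K_{\overline{X}_{i}}+\Delta_{i}+t\overline{A}_{i})\cdot R<0$; combining the cone-theorem length bound for lc pairs with a small enough choice of $t$ at the outset (and using a discreteness property for the numerical classes of extremal rays that arise in this MMP) forces $(K_{\overline{X}_{i}}+\Delta_{i})\cdot R=0$.

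For the symmetry of each flop, $g_{i}$ is extremal from the MMP. To show $\rho(\overline{X}_{i+1}/V_{i})=1$, take a common $\mathbb{Q}$-factorial resolution $W$ of $\overline{X}_{i}$ and $\overline{X}_{i+1}$; the flop is iso in codimension one, so the exceptional divisors of $W\to\overline{X}_{i}$ and $W\to\overline{X}_{i+1}$ coincide, giving $\rho(W/\overline{X}_{i})=\rho(W/\overline{X}_{i+1})$, which combined with $\rho(W/V_{i})=\rho(W/\overline{X}_{i})+1=\rho(W/\overline{X}_{i+1})+\rho(\overline{X}_{i+1}/V_{i})$ yields $\rho(\overline{X}_{i+1}/V_{i})=1$. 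Together with Lemma \ref{lem--extcont} applied to $g'_{i}$ over the lc pushforward $(V_{i},\Delta_{V_{i}})$ (whose lc centers meet $g_{i}(f^{-1}_{i}(U))$ by the argument above), extremality of $g'_{i}$ follows. For condition (b), take $\overline{B}_{i}:=\Delta_{i}+t\overline{A}_{i}$: then $-(K+\overline{B}_{i})$ is ample over $V_{i}$ by the flipping contraction property. For $\overline{B}_{i+1}$, choose a general ample $\mathbb{R}$-divisor $A\geq 0$ on $X$ with $(X,\Delta+A)$ lc and run a backward MMP from $\overline{X}'$ with scaling of the birational transform $\overline{A}'$ of $A$; after aligning this backward MMP with the forward one flop by flop, the backward flipping contraction at step $i$ yields $\overline{B}_{i+1}:=\Delta_{i+1}+s\overline{A}'_{i+1}$ with $-(K+\overline{B}_{i+1})$ ample over $V_{i}$ for some small $s>0$. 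The hard part will be the rigorous derivation of $(K_{\overline{X}_{i}}+\Delta_{i})\cdot R=0$ for $\mathbb{R}$-boundary pairs, which requires a careful discreteness argument for the choice of $t$, and the synchronization of the forward and backward MMPs so they produce matching sequences of flops realizing condition (b) simultaneously.
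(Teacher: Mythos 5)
Your overall strategy (Proposition \ref{prop--smallmodif} to construct $\overline{X}$, then run a $(K+\Delta+tA)$-MMP over $Z$ via \cite[Theorem 1.7]{hashizumehu}) coincides with the paper's initial reduction, but there is a genuine gap at the heart of the argument: you assert that each MMP step is automatically a symmetric flop, and your justification of $\rho(\overline{X}_{i+1}/V_{i})=1$ is incorrect. You argue that since $\varphi_{i}$ is an isomorphism in codimension one, the exceptional prime divisors of $W\to\overline{X}_{i}$ and $W\to\overline{X}_{i+1}$ coincide, hence $\rho(W/\overline{X}_{i})=\rho(W/\overline{X}_{i+1})$. This inference is false when $\overline{X}_{i}$ or $\overline{X}_{i+1}$ fails to be $\mathbb{Q}$-factorial: the relative Picard number of a small birational morphism from a $\mathbb{Q}$-factorial variety can be strictly positive with no exceptional divisors at all (a small $\mathbb{Q}$-factorialization). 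Indeed, the paper's Example \ref{exam--2} exhibits a flop for an lc pair that is \emph{not} symmetric: there $X_{W}$ is $\mathbb{Q}$-factorial, $\rho(X_{W}/X)=1$, yet the flipped side $X_{V}$ is not $\mathbb{Q}$-factorial, so by Lemma \ref{lem--symflop} the flop cannot be symmetric. Your claim, as stated, would contradict this example.

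What is missing is the induction the paper runs on $\rho(Y/Z)-\max\{\rho(X/Z),\rho(X'/Z)\}$ for a fixed common smooth model $Y$. Each flip $\varphi_{i}$ only induces an \emph{injective} (not necessarily bijective) map $N^{1}(X_{i}/Z)_{\mathbb{R}}\to N^{1}(X_{i+1}/Z)_{\mathbb{R}}$, so the MMP can genuinely increase $\rho(\,\cdot\,/Z)$. The paper splits into two cases: if $\rho(X/Z)<\rho(X_{l}/Z)$, it replaces $(X',\Delta')$ by the end $(X_{l},\Delta_{l})$ of the MMP — the inductive quantity strictly drops and one recurses; only if $\rho(X/Z)=\rho(X_{l}/Z)$ (so all the maps $\varphi_{i*}$ are isomorphisms on $N^{1}$) does it prove each step is a symmetric flop, and even then the extremality of $g'_{i}$ is deduced not from a Picard-number count of a resolution but from property (b) together with \cite[Theorem 13.1]{fujino-fund} (which upgrades numerical triviality over $V_{i}$ to $\mathbb{Q}$-linear triviality) and the $N^{1}$-isomorphism. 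Without this case distinction there is no reason the MMP you run should yield symmetric flops. Finally, your construction of $\overline{B}_{i+1}$ via a ``backward MMP'' and its ``synchronization'' with the forward one is both unnecessary and unresolved: the paper gets $\overline{B}_{i+1}$ directly by picking an ample $H_{i}\geq 0$ on $X_{i}$ with $K_{X_{i}}+\Delta_{i}+\epsilon A_{i}+H_{i}\sim_{\mathbb{R},V_{i}}0$, noting that the strict transform $H_{i+1}$ satisfies $-H_{i+1}$ ample over $V_{i}$, and setting $\overline{B}_{i+1}=\Delta_{i+1}+H_{i+1}$.
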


Note that the $\mathbb{R}$-divisor $\overline{B}_{i}$ of Theorem \ref{thm--flop} (b) for $g_{i}\colon \overline{X}_{i}\to V_{i}$ is not necessarily the same as the $\mathbb{R}$-divisor of Theorem \ref{thm--flop} (b) for $g'_{i-1}\colon \overline{X}_{i}\to V_{i-1}$.

\begin{proof}[{\rm \bf Proof of Theorem \ref{thm--flop}}]
By taking a common resolution of $\phi\colon X \dashrightarrow X'$ and the negativity lemma, we have $a(P,X,\Delta)=a(P,X',\Delta')$ for any prime divisor $P$ over $X$. 
Therefore, the set $\phi(U)$ intersects all lc centers of $(X',\Delta')$. 
We prove that there are projective small birational morphisms $f\colon \overline{X}\to X$ and $f'\colon\overline{X}'\to X'$ from normal quasi-projective varieties such that $f$ and $f'$ satisfy property (a) of Theorem \ref{thm--flop} and the two lc pairs $(\overline{X}, f^{-1}_{*}\Delta)$ and $(\overline{X}', f'^{-1}_{*}\Delta')$ are connected by a sequence of symmetric flops which satisfy property (b) of Theorem \ref{thm--flop}. 
If we can obtain these birational maps, then $f$ and $f'$ are compositions of extremal contractions by Lemma \ref{lem--extcont}, from which Theorem \ref{thm--flop} holds true. 
In this way, it is sufficient to prove the statement. 
We fix a smooth variety $Y$ projective over $Z$ such that there are birational contractions $Y\dashrightarrow X$ and $Y\dashrightarrow X'$ over $Z$. 
We prove Theorem \ref{thm--flop} by induction on $\rho(Y/Z)-{\rm max}\{\rho(X/Z),\rho(X'/Z)\}$. 

We first consider the case where $\rho(Y/Z)-{\rm max}\{\rho(X/Z),\rho(X'/Z)\}<0$. 
But the case cannot happen because of Lemma \ref{lem--picard}, thus we finish this case. 
In particular, we settle the starting point of the induction. 

From now on, we assume $\rho(Y/Z)-{\rm max}\{\rho(X/Z),\rho(X'/Z)\}\geq0$. 
By switching $(X,\Delta)$ and $(X',\Delta')$ if necessary, we may assume $\rho(X/Z)\geq\rho(X'/Z)$. 
By Proposition \ref{prop--smallmodif}, there is a projective small birational morphism $\psi\colon\bar{X}\to X$ from a normal quasi-projective variety such that by putting $\bar{\Delta}=\psi^{-1}_{*}\Delta$ we have
\begin{itemize}
\item
$\psi$ is an isomorphism over $U$, and  
\item 
there is an ample $\mathbb{R}$-divisor $A'\geq 0 $ on $X'$ such that $(X',\Delta'+A')$ is lc and if we set $\bar{A}$ as the birational transform of $A'$ on $\bar{X}$, then $K_{\bar{X}}+\bar{\Delta}+\bar{A}$ is $\mathbb{R}$-Cartier and the pair $(\bar{X},\bar{\Delta}+\bar{A})$ is lc.  
\end{itemize} 
Note that $(\bar{X},\bar{\Delta})$ is lc and  $K_{\bar{X}}+\bar{\Delta}$ is nef since $K_{\bar{X}}+\bar{\Delta}=\psi^{*}(K_{X}+\Delta)$. 
By construction, the set $\psi^{-1}(U)$ intersects all lc centers of $(\bar{X},\bar{\Delta})$ and the composition $\phi\circ\psi\colon \bar{X}\dashrightarrow X'$ is an isomorphism on $\psi^{-1}(U)$. 
Therefore, the lc pairs $(\bar{X},\bar{\Delta})$ and $(X',\Delta')$ and the map $\phi\circ\psi\colon \bar{X}\dashrightarrow X'$ over $Z$ satisfy the hypothesis of Theorem \ref{thm--flop}. 
It is obvious that 
$$\rho(Y/Z)-{\rm max}\{\rho(\bar{X}/Z),\rho(X'/Z)\} \leq \rho(Y/Z)-{\rm max}\{\rho(X/Z),\rho(X'/Z)\}.$$
From this and argument in the first paragraph, we may replace $(X,\Delta)$ with $(\bar{X},\bar{\Delta})$. 
In this way, replacing $(X,\Delta)$, we may assume that there is an ample $\mathbb{R}$-divisor $A'\geq 0 $ on $X'$ such that $(X',\Delta'+A')$ is lc, $A:=\phi^{-1}_{*}A'$ is $\mathbb{R}$-Cartier and the pair $(X,\Delta+A)$ is lc. 

For any $t\in(0,1)$, the pair $(X',\Delta'+tA')$ is lc and $K_{X'}+\Delta'+tA'$ is ample over $Z$. 
So the pair $(X',\Delta'+tA')$ is the log canonical model of $(X,\Delta+tA)$ over $Z$. 
In particular, $(X,\Delta+tA)$ has a good minimal model over $Z$ for any $t\in (0,1)$. 
By argument of length of extremal rays (\cite[Section 18]{fujino-fund} and \cite[Theorem 4.7.2]{fujino-book}, see also \cite[Remark 2.13]{has-mmp}), we can find $\epsilon \in (0,1)$ such that the birational transform of $K_{X}+\Delta$ is trivial over all extremal contractions of any sequence of steps of the $(K_{X}+\Delta+\epsilon A)$-MMP over $Z$. 
By Theorem \ref{thm--mmp}, there is a sequence of steps of a $(K_{X}+\Delta+\epsilon A)$-MMP over $Z$
$$(X,\Delta+\epsilon A)=(X_{0},\Delta_{0}+\epsilon A_{0})\dashrightarrow (X_{1},\Delta_{1}+\epsilon A_{1})\dashrightarrow \cdots \dashrightarrow (X_{l},\Delta_{l}+\epsilon A_{l})$$
terminating with a good minimal model $(X_{l},\Delta_{l}+\epsilon A_{l})$. 
Then all maps $X_{i}\dashrightarrow X_{i+1}$ are flips since $K_{X}+\Delta+\epsilon A$ is movable over $Z$. 
Since $(X',\Delta'+\epsilon A')$ is the log canonical model of $(X,\Delta+tA)$ over $Z$, there is a natural small birational morphism $\psi'\colon X_{l} \to X'$ such that $\Delta'=\psi'_{*}\Delta_{l}$. 
For each $0\leq i<l$, we denote the birational map $X_{i}\dashrightarrow X_{i+1}$ by $\varphi_{i}$.  
As in Remark \ref{rem--mmp} (3), each birational map $\varphi_{i}$ induces an injective linear map $\varphi_{i*}\colon N^{1}(X_{i}/Z)_{\mathbb{R}}\to N^{1}(X_{i+1}/Z)_{\mathbb{R}}$. 
Then
$$\rho(X/Z)\leq \cdots \leq \rho(X_{i}/Z)\leq \rho(X_{i+1}/Z) \leq \cdots \leq \rho(X_{l}/Z).$$
Now there are two possibilities: $\rho(X/Z)<\rho(X_{l}/Z)$ and $\rho(X/Z)=\rho(X_{l}/Z)$. 

First we assume $\rho(X/Z)<\rho(X_{l}/Z)$. 
Because the stable base locus of $K_{X}+\Delta+\epsilon A$ over $Z$ is contained in $X\setminus U$, by basic properties of log MMP, the induced birational map $X \dashrightarrow X_{l}$ is an isomorphism on $U$, and the image of $U$ on $X_{l}$ is $\psi'^{-1}(\phi(U))$. 
Since $K_{X_{l}}+\Delta_{l}=\psi'^{*}(K_{X'}+\Delta')$, all lc centers of $(X_{l},\Delta_{l})$ intersect $\psi'^{-1}(\phi(U))$. 
In this way, the birational map $(X,\Delta)\dashrightarrow (X_{l},\Delta_{l})$ over $Z$ satisfies the hypothesis of Theorem \ref{thm--flop}. 
Now we have $\rho(X/Z)={\rm max}\{\rho(X/Z),\rho(X'/Z)\}$, therefore
$$\rho(Y/Z)-{\rm max}\{\rho(X/Z),\rho(X_{l}/Z)\}<\rho(Y/Z)-{\rm max}\{\rho(X/Z),\rho(X'/Z)\}.$$ 
The argument in the first paragraph shows that we may replace $(X',\Delta')$ with $(X_{l},\Delta_{l})$. 
Replacing $(X',\Delta')$ with $(X_{l},\Delta_{l})$ and by the induction hypothesis of Theorem \ref{thm--flop}, we see that Theorem \ref{thm--flop} holds in this case. 

Next, we assume $\rho(X/Z)=\rho(X_{l}/Z)$. 
Then $\rho(X_{i}/Z)=\rho(X_{i+1}/Z)$ for any $0\leq i<l$. 
All we have to prove is that $X\dashrightarrow X_{l}$ is a sequence of symmetric flops which satisfy property (b) of Theorem \ref{thm--flop}. 
Note that the  birational map $X\dashrightarrow X_{l}$ is a sequence of flops for $K_{X}+\Delta$ over $Z$ since each $\varphi_{i}\colon X_{i}\dashrightarrow X_{i+1}$ is a flip over $Z$ and $K_{X_{i}}+\Delta_{i}$ is trivial over the extremal contraction of the $(i+1)$-th step of the log MMP. 
Fix $0\leq i<l$ and consider the $(i+1)$-th step of the log MMP
\begin{equation*}
\xymatrix
{
X_{i}\ar@{-->}^{\varphi_{i}}[rr]\ar[dr]_{g_{i}}&&X_{i+1} \ar[dl]^{g'_{i}}\\
&V_{i}
}
\end{equation*}
where $-(K_{X_{i}}+\Delta_{i}+\epsilon A_{i})$ is ample over $V_{i}$ by construction. 
Since $\rho(X_{i}/Z)=\rho(X_{i+1}/Z)$, we can apply Lemma \ref{lem--suffcond}. 
By Lemma \ref{lem--suffcond}, the flops is symmetric and there is $E_{i+1}\geq 0$ on $X_{i+1}$ such that $(X_{i+1},\Delta_{i+1}+E_{i+1})$ is lc and $-(K_{X_{i+1}}+\Delta_{i+1}+E_{i+1})$ is ample over $V_{i}$. 
Then the divisors $\Delta_{i}+\epsilon A_{i}$ and $\Delta_{i+1}+E_{i+1}$ satisfy Theorem \ref{thm--flop} (b). 
So we are done. 
\end{proof}

\begin{proof}[\rm{\bf Proof of Theorem \ref{thm--main}}]
It immediately follows from Theorem \ref{thm--flop} and Lemma \ref{lem--symflop}. 
\end{proof}

\begin{proof}[\rm{\bf Proof of Theorem \ref{thm--pic0}}]
Firstly, we prove the first assertion of Theorem \ref{thm--pic0}. 
We only prove the existence of a small $\mathbb{Q}$-factorialization of $X'$ under the assumption on existence of a small $\mathbb{Q}$-factorialization $f\colon W\to X$ because the another direction can be proved similarly. 
We put $\Gamma=f^{-1}_{*}\Delta$. 
Then $(W,\Gamma)$ is lc since $K_{W}+\Gamma=f^{*}(K_{X}+\Delta)$. 
By taking a common resolution of $\phi$ and the negativity lemma, we have $a(P,X,\Delta)=a(P,X',\Delta')$ for any prime divisor $P$ over $X$. 
Thus the set $\phi(U)$ intersects all lc centers of $(X',\Delta')$.  
Let $A'\geq0$ be an ample $\mathbb{R}$-divisor on $X'$ such that $(X',\Delta'+A')$ is lc, and let $A_{W}$ be the birational transform of $A'$ on $W$. 

We show that there is $\epsilon>0$ such that the pair $(W,\Gamma+\epsilon A_{W})$ is lc.  
It is sufficient to prove that ${\rm Supp}A_{W}$ contains no lc centers of $(W,\Gamma)$. 
Pick any prime divisor $P$ over $W$ with $a(P,W,\Gamma)=-1$. 
Then it is easy to check $a(P,X',\Delta')=-1$. 
We denote the center of $P$ on $W$ (resp.~$X'$) by $c_{W}(P)$ (resp.~$c_{X'}(P)$). 
Then $c_{X'}(P)$ is an lc center of $(X',\Delta')$ and $c_{X'}(P)$ intersects $\phi(U)$. 
Since $(X',\Delta'+A')$ is lc, we have $c_{X'}(P)|_{\phi(U)}\not\subset {\rm Supp}A'|_{\phi(U)}$. 
By hypothesis, the induced birational map $\phi\circ f\colon W\dashrightarrow X'$ is small and it is a morphism over $\phi(U)$. 
Since $f\colon W\to X$ is small, the restriction $A_{W}|_{f^{-1}(U)}$ is equal to the pullback of $A'|_{\phi(U)}$ to $f^{-1}(U)$. 
Then $c_{W}(P)|_{f^{-1}(U)}\not\subset {\rm Supp}A_{W}|_{f^{-1}(U)}$, so any lc center of $(W,\Gamma)$ is not contained in ${\rm Supp}A_{W}$. 
Therefore, we can find $\epsilon>0$ such that $(W,\Gamma+\epsilon A_{W})$ is lc. 

Since $W\dashrightarrow X'$ is small and $K_{X'}+\Delta'+\epsilon A'$ is ample over $Z$, the pair $(X',\Delta'+\epsilon A')$ is the log canonical model of $(W,\Gamma+\epsilon A_{W})$ over $Z$. 
By Theorem \ref{thm--mmp}, we may construct a sequence of steps of a $(K_{W}+\Gamma+\epsilon A_{W})$-MMP over $Z$
$$(W,\Gamma+\epsilon A_{W})\dashrightarrow (W',\Gamma'+\epsilon A_{W'})$$
to a good minimal model over $Z$, and there is a natural birational morphism $W'\to X'$. 
By Remark \ref{rem--mmp} (4) the variety $W'$ is $\mathbb{Q}$-factorial, and $W'\to X'$ is small by construction. 
In this way, we see that $X'$ has a small $\mathbb{Q}$-factorialization. 

From now on, we freely use notation as in Theorem \ref{thm--flop}. 
Next, we prove the second assertion of Theorem \ref{thm--pic0}. 
By property (a) of Theorem \ref{thm--flop} and Lemma \ref{lem--extcont}, we obtain 
$R^{q}f_{*}\mathcal{O}_{\overline{X}}=0$ and $R^{q}f'_{*}\mathcal{O}_{\overline{X}'}=0$ for any $q>0$. 
By relative Leray spectral sequences, there are isomorphisms of sheaves on $Z$
\begin{equation*}\tag{$1$}
R^{p}\pi_{*}\mathcal{O}_{X}\overset{\sim}{\longrightarrow}R^{p}(\pi\circ f)_{*}\mathcal{O}_{\overline{X}}\quad  {\rm and} \quad R^{p}\pi'_{*}\mathcal{O}_{X'}\overset{\sim}{\longrightarrow}R^{p}(\pi' \circ f')_{*}\mathcal{O}_{\overline{X}'}.
\end{equation*}
For each $0\leq i<l$, we denote the induced morphism $V_{i}\to Z$ by $\pi_{i}$. 
To prove the second assertion of Theorem \ref{thm--pic0}, it is sufficient to prove the existence of an isomorphism
\begin{equation*}\tag{$2$}
R^{p}(\pi_{i}\circ g_{i})_{*}\mathcal{O}_{\overline{X}_{i}}\overset{\sim}{\longrightarrow} R^{p}(\pi_{i}\circ g'_{i})_{*}\mathcal{O}_{\overline{X}_{i+1}}
\end{equation*}
for any $p>0$ and each $i$. 
By property (b) of Theorem \ref{thm--flop} and \cite[Theorem 5.6.4]{fujino-book}, we have $R^{q}g_{i*}\mathcal{O}_{\overline{X}_{i}}=R^{q}g'_{i*}\mathcal{O}_{\overline{X}_{i+1}}=0$. 
By relative Leray spectral sequences, we obtain
$$R^{p}(\pi_{i}\circ g_{i})_{*}\mathcal{O}_{\overline{X}_{i}}\overset{\sim}{\longleftarrow}R^{p}\pi_{i*}\mathcal{O}_{V_{i}}\overset{\sim}{\longrightarrow}R^{p}(\pi_{i}\circ g'_{i})_{*}\mathcal{O}_{\overline{X}_{i+1}}
$$
for any $p>0$. 
In this way, we get the isomorphism (2). 
By (1) and (2), the second assertion of Theorem \ref{thm--pic0} holds. 

Finally, we prove the third assertion of Theorem \ref{thm--pic0}. 
Pick any Cartier divisor $D$ on $X$ such that $D\equiv_{Z} r(K_{X}+\Delta)$ for an $r\in \mathbb{R}$. 
For any $0\leq i\leq l$, we denote the birational transforms of $D$ and $\Delta$ on $\overline{X}_{i}$ by $\overline{D}_{i}$ and $\overline{\Delta}_{i}$, respectively. 
Suppose that $\overline{D}_{i}$ is Cartier and $\overline{D}_{i}\equiv_{Z}r(K_{\overline{X}_{i}}+\overline{\Delta}_{i})$. 
Then $\overline{D}_{i}\equiv_{V_{i}}0$ by construction. 
By property (b) of Theorem \ref{thm--flop} and \cite[Theorem 13.1]{fujino-fund}, the divisor $m\overline{D}_{i}$ is $g_{i}$-globally generated for any $m\gg0$, in other words, the morphism $g_{i}^{*} g_{i*}\mathcal{O}_{\overline{X}_{i}}(m\overline{D}_{i})\to \mathcal{O}_{\overline{X}_{i}}(m\overline{D}_{i})$ is surjective for any $m\gg0$. 
Since $\overline{D}_{i}\equiv_{V_{i}}0$, we see that $m\overline{D}_{i}$ is linearly equivalent to the pullback of a Cartier divisor on $V_{i}$ for any $m\gg0$. 
In this way, we can find a Cartier divisor $G_{i}$ on $V_{i}$ such that $\overline{D}_{i}\sim g_{i}^{*}G_{i}$. 
Then we have $\overline{D}_{i+1}\sim g'^{*}_{i}G_{i}$, thus $\overline{D}_{i+1}$ is Cartier, and $\overline{D}_{i+1}\equiv_{Z}r(K_{\overline{X}_{i+1}}+\overline{\Delta}_{i+1})$. 
Since $\overline{D}_{0}=f^{*}D$ is Cartier and $\overline{D}_{0}\equiv_{Z}r(K_{\overline{X}_{0}}+\overline{\Delta}_{0})$, we can check by induction on $i$ that the birational transform $\overline{D}'$ on $\overline{X}'=\overline{X}_{l}$ is Cartier and $\overline{D}'\equiv_{Z}r(K_{\overline{X}'}+\overline{\Delta}')$. 
By property (a) of Theorem \ref{thm--flop}, we may apply Lemma \ref{lem--extcont} to $f'\colon \overline{X}'\to X'$ and $\overline{D}'$, then $\phi_{*}D=f'_{*}\overline{D}'$ is  Cartier. 
Furthermore, we have $\phi_{*}D\equiv_{Z}r(K_{X'}+\Delta')$. 
In this way, the third assertion of Theorem \ref{thm--pic0} holds. 
We complete the proof.
\end{proof}

\begin{proof}[\rm{\bf Proof of Corollary \ref{cor--cartierindex}}]
It follows from Lemma \ref{lem--mmp-basic} and Theorem \ref{thm--pic0}. 
\end{proof}

\section{Examples}\label{sec4}

\begin{exam}\label{exam--1}
We give a simple example of two projective (not necessarily $\mathbb{Q}$-factorial) klt log minimal models $(X,\Delta)$ and $(X',\Delta')$ such that  the pairs cannot be connected by a sequence of flops. 

Let $(X,\Delta)$ be a klt pair such that $K_{X}+\Delta$ is nef, $X$ is not $\mathbb{Q}$-factorial and $\rho(X)=1$. 
For example, take $X$ as a normal projective cone over $\mathbb{P}^{1}\times \mathbb{P}^{1}$ and $(X,\Delta)$ as a klt pair such that $K_{X}+\Delta\sim_{\mathbb{Q}}0$. 
Let $X'$ be a small $\mathbb{Q}$-factorialization of $X$, and let $\Delta'$ be the birational transform of $\Delta$ on $X'$. 
Then $(X,\Delta)$ and $(X',\Delta')$ cannot be connected by flops because $\rho(X)=1$ which implies that there is no non-trivial contraction from $X$.  
\end{exam}

From now on, we construct a diagram to give examples (cf.~\cite[Exercise 96]{kollar-exercise} and \cite[Example 3.13.9]{fujino-book}).

\begin{const}\label{const}
Let $V$ be a smooth Fano variety such that $\rho(V)=1$, and let $W$ be a smooth projective variety such that $K_{W}\sim_{\mathbb{Q}}0$. 
We define $p_{V}\colon V \times W\to V$ and $p_{W}\colon V \times W \to W$ by natural projections. 
Fix an ample $\mathbb{Q}$-divisor $H_{V}\sim_{\mathbb{Q}}-K_{V}$ and fix an ample $\mathbb{Q}$-divisor $H_{W}$ on $W$, then  put $H_{V \times W}=p_{V}^{*}H_{V}+p_{W}^{*}H_{W}$. 
We pick an integer $m>0$ such that  $mH_{V \times W}$ is a very ample Cartier divisor. 
We consider a $\mathbb{P}^{1}$-bundle
$$f\colon Y=\mathbb{P}_{V \times W}(\mathcal{O}_{V \times W}\oplus \mathcal{O}_{V \times W}(-mH_{V \times W}))\to V \times W.$$ 
Let $T$ be the unique section corresponding to $\mathcal{O}_{Y}(1)$, and put $A_{Y}=T+mf^{*}H_{V \times W}$. 
Since $-K_{V\times W}\sim_{\mathbb{Q}} p_{V}^{*}H_{V}$, we obtain $K_{Y}+T+A_{Y}+f^{*}p_{V}^{*}H_{V}\sim_{\mathbb{Q}} 0$. 
By construction, we also see that $A_{Y}$, $A_{Y}+f^{*}p_{V}^{*}H_{V}$ and $A_{Y}+f^{*}p_{W}^{*}H_{W}$ are all semi-ample, $Y$ is smooth, and the pair $(Y, T)$ is lc. 
Let $g\colon Y\to X$, $g_{V}\colon Y\to X_{V}$ and $g_{W}\colon Y\to X_{W}$ be contractions induced by $A_{Y}$, $A_{Y}+f^{*}p_{V}^{*}H_{V}$ and $A_{Y}+f^{*}p_{W}^{*}H_{W}$, respectively. 
These morphisms are isomorphisms outside $T$. 
By calculations of intersection numbers of curves on $Y$ with those three semi-ample divisors, we see that all curves on $Y$ contracted by $g_{V}$ or $g_{W}$ are contracted by $g$. 
So there are birational morphisms $\pi_{V}\colon X_{V}\to X$ and $\pi_{W}\colon X_{W}\to X$ such that $g=\pi_{V}\circ g_{V}=\pi_{W}\circ g_{W}$. 
We have constructed the following diagram.
 \begin{equation*}
\xymatrix@=18pt
{
&V\times W\ar[dl]_{p_{V}}\ar[dr]^{p_{W}}&&&Y\ar[lll]_{f}\ar[dl]_{g_{V}}\ar[dd]^{g}\ar[dr]^{g_{W}}&\\
V&&W&X_{V}\ar[dr]_{\pi_{V}}&&X_{W}\ar[dl]^{\pi_{W}}\\
&&&&X
}
\end{equation*}
By construction, the image of $T$ by $g$ is a point. 
Since the restriction of $A_{Y}+f^{*}p_{V}^{*}H_{V}$ and $A_{Y}+f^{*}p_{W}^{*}H_{W}$ to $T$ are not big, we see that ${\rm dim}T>{\rm dim}g_{V}(T)$ and ${\rm dim}T>{\rm dim}g_{W}(T)$. 
Since $g\colon Y\to X$ is an isomorphism outside $T$, we see that the morphisms $\pi_{V}$ and $\pi_{W}$ are small birational morphisms. 
Then the induced birational map $X_{V}\dashrightarrow X_{W}$ is small. 

The relation $A_{Y}=g^{*}g_{*}A_{Y}$ shows 
$$g_{V} ^{*}g_{V*}A_{Y}=g_{V} ^{*}\pi_{V} ^{*}g_{*}A_{Y}=g^{*}g_{*}A_{Y}=A_{Y}.$$
Similarly, we have $A_{Y}=g_{W}^{*}g_{W*}A_{Y}$. 
Put $H_{X_{V}}=g_{V*}f^{*}p_{V}^{*}H_{V}$ and $H_{X_{W}}=g_{W*}f^{*}p_{W}^{*}H_{W}$. 
It is easy to see $A_{Y}+f^{*}p_{V}^{*}H_{V}=g_{V}^{*}g_{V*}(A_{Y}+f^{*}p_{V}^{*}H_{V})$ since $g_{V}$ is the contraction induced by $A_{Y}+f^{*}p_{V}^{*}H_{V}$. 
Using these two relations
$$A_{Y}=g_{V}^{*}g_{V*}A_{Y}\quad {\rm and} \quad A_{Y}+f^{*}p_{V}^{*}H_{V}=g_{V}^{*}g_{V*}(A_{Y}+f^{*}p_{V}^{*}H_{V}),$$
we see that $H_{X_{V}}$ is $\mathbb{Q}$-Cartier and $f^{*}p_{V}^{*}H_{V}=g_{V}^{*}H_{X_{V}}$. 
Similarly, we see that $H_{X_{W}}$ is $\mathbb{Q}$-Cartier and $f^{*}p_{W}^{*}H_{W}=g_{W}^{*}H_{X_{W}}$. 
Since $H_{V}$ and $H_{W}$ are ample, both $H_{X_{V}}$ and $H_{X_{W}}$ are semi-ample, and contractions induced by $H_{X_{V}}$ and $H_{X_{W}}$ are morphisms $h_{V}\colon X_{V}\to V$ and $h_{W}\colon X_{W}\to W$ satisfying $p_{V}\circ f=h_{V}\circ g_{V}$ and $p_{W}\circ f=h_{W}\circ g_{W}$, respectively. 
Then $H_{X_{V}}=h_{V}^{*}H_{V}$ and $H_{X_{W}}=h_{W}^{*}H_{W}$. 

We have constructed the following diagram
\begin{equation*}
\xymatrix@R=22pt
{
&&Y\ar[dll]_{p_{V}\circ f}\ar[dl]^{g_{V}}\ar[dr]_{g_{W}}\ar[drr]^{p_{W}\circ f}\\
V&X_{V}\ar[l]^{h_{V}}\ar[dr]_{\pi_{V}}\ar@{-->}[rr]&&X_{W}\ar[dl]^{\pi_{W}}\ar[r]_{h_{W}}&W\\
&&X
}
\end{equation*}
and $\mathbb{Q}$-Cartier divisors 
\begin{equation*}
\begin{split}
&\bullet H_{V}\sim_{\mathbb{Q}}-K_{V}, \quad \bullet H_{X_{V}}=g_{V*}(p_{V}\circ f)^{*}H_{V},\\\ &\bullet H_{W},\quad\quad\quad \quad\quad \bullet H_{X_{W}}=g_{W*}(p_{W}\circ f)^{*}H_{W},\quad {\rm and}\\ 
&\bullet A_{Y}=T+m(p_{V}\circ f)^{*}H_{V}+m(p_{W}\circ f)^{*}H_{W}
\end{split}
\end{equation*}
satisfying
\begin{enumerate}
\item
$Y$ is smooth and $(Y, T)$ is lc, 
\item
$K_{Y}+T+A_{Y}+(p_{V}\circ f)^{*}H_{V}\sim_{\mathbb{Q}} 0$, 
\item
$A_{Y}=g_{V}^{*}g_{V*}A_{Y}=g_{W}^{*}g_{W*}A_{Y}=(\pi_{W}\circ g_{W})^{*}(\pi_{W}\circ g_{W})_{*}A_{Y}$, and 
 \item $(p_{V}\circ f)^{*}H_{V}=g_{V}^{*}H_{X_{V}}$ and $(p_{W}\circ f)^{*}H_{W}=g_{W}^{*}H_{X_{W}}$. 
\end{enumerate}
By construction, the following property also holds.
\begin{equation*}\tag{$\spadesuit$}
g_{V*}A_{Y}+H_{X_{V}}, \;\;g_{W*}A_{Y}+H_{X_{W}} {\rm \;\;and}\;\; (\pi_{V}\circ g_{V})_{*}A_{Y} \;\;{\rm are\;\;ample.} 
\end{equation*}

We investigate properties of diagram consisting of $\pi_{V}\colon X_{V}\to X$, $\pi_{W}\colon X_{W}\to X$ and the map $X_{V}\dashrightarrow X_{W}$ over $X$.

\begin{prop}\label{prop--property}
With notation as in Construction, the followings hold true.
\begin{enumerate}
\item[(i)] \label{prop4.2--a}
The pair $(X_{W},0)$ is $\mathbb{Q}$-factorial klt and $\rho(X_{W})=\rho(W)+1$, 
\item[(ii)] \label{prop4.2--b}
the divisors $-g_{W*}(p_{V}\circ f)^{*}H_{V}$ and $H_{X_{V}}$ are both ample over $X$, 
\item[(iii)] \label{prop4.2--c}
the equality ${\rm dim}H^{p}(X_{W},\mathcal{O}_{X_{W}})={\rm dim}H^{p}(W,\mathcal{O}_{W})$ holds for any $p>0$, and 
\item[(iv)] \label{prop4.2--d}
the variety $X_{V}$ is lc Fano. 
\end{enumerate}
\end{prop}

\begin{proof}[{\rm \bf Proof}]
We divide the proof into several steps. 
\begin{step1}\label{step1--const}
Firstly, we show (i). 

Pick any Weil divisor $D$ on $X_{W}$. 
Then $g_{W*}^{-1}D$ is Cartier, and it is linearly equivalent to the sum of a multiple of $T$ and the pullback of a Cartier divisor on $V\times W$ (\cite[Excercise III, 12.5]{hartshorne}). 
Therefore, we can find a Cartier divisor $D'$ on $V\times W$ such that $D\sim g_{W*}f^{*}D'$ as Weil divisors. 
Recall that $V$ is a smooth Fano variety with $\rho(V)=1$, so $H^{1}(V,\mathcal{O}_{V})=0$ and therefore any $\mathbb{Q}$-Cartier divisor on $V$ is $\mathbb{Q}$-linearly equivalent to some multiple of $H_{V}$. 
Since $W$ is smooth, we can find $\alpha,\beta\in \mathbb{Z}$ and a Cartier divisor $D_{W}$ on $W$ such that $\alpha \neq 0$ and  $\alpha D'\sim \beta p_{V}^{*}H_{V}+p_{W}^{*}D_{W}$ as Cartier divisors (\cite[III, Exercise 12.6 (b)]{hartshorne}). 
Then
$$ \alpha D\sim g_{W*}f^{*}(\beta p_{V}^{*}H_{V}+p_{W}^{*}D_{W})= \beta g_{W*}(p_{V}\circ f)^{*}H_{V}+h_{W}^{*}D_{W}$$ 
as Weil divisors. 
Therefore, to show that $X_{W}$ is $\mathbb{Q}$-factorial, we only need to show that $g_{W*}(p_{V}\circ f)^{*}H_{V}$ is $\mathbb{Q}$-Cartier. 
Now the divisors $g_{W*}A_{Y}=m\bigl(g_{W*}(p_{V}\circ f)^{*}H_{V}+H_{X_{W}}\bigr)$ and  $H_{X_{W}}$ are both $\mathbb{Q}$-Cartier. 
Thus, we see that $g_{W*}(p_{V}\circ f)^{*}H_{V}$ is $\mathbb{Q}$-Cartier. 
Then $D$ is $\mathbb{Q}$-Cartier, and therefore $X_{W}$ is $\mathbb{Q}$-factorial. 

By the above argument, any $\mathbb{R}$-Cartier divisor on $X_{W}$ is $\mathbb{R}$-linearly equivalent to an $\mathbb{R}$-linear combination of $g_{W*}(p_{V}\circ f)^{*}H_{V}$ and the pullback of an $\mathbb{R}$-Cartier divisor on $W$. 
From this, we have $\rho(X_{W})\leq \rho(W)+1$. 
On the other hand, the existence of the morphism $h_{W}\colon X_{W}\to W$ shows that $\rho(X_{W})>\rho(W)$. 
Therefore $\rho(X_{W})= \rho(W)+1$. 

We put $B_{X_{W}}=g_{W*}(p_{V}\circ f)^{*}H_{V}$. 
Then 
$$K_{Y}+T+A_{Y}+(p_{V}\circ f)^{*}H_{V}=g_{W}^{*}(K_{X_{W}}+g_{W*}A_{Y}+B_{X_{W}})$$
 by (2). 
 By (3), we obtain $K_{Y}+T+(p_{V}\circ f)^{*}H_{V}=g_{W}^{*}(K_{X_{W}}+B_{X_{W}})$. 
Furthermore, we have $g_{W}^{*}B_{X_{W}}=\tfrac{1}{m}T+(p_{V}\circ f)^{*}H_{V}$ because
\begin{equation*}
\begin{split}
g_{W }^{*}B_{X_{W }}&=g_{W }^{*}(\tfrac{1}{m}g_{W *}A_{Y}-H_{X_{W }})=\tfrac{1}{m}A_{Y}-g_{W }^{*}H_{X_{W }}=\tfrac{1}{m}T+(p_{V}\circ f)^{*}H_{V}
\end{split}
\end{equation*}
by definition of $A_{Y}$ and (4). 
Hence we see that $K_{Y}+(1-\tfrac{1}{m})T=g_{W}^{*}K_{X_{W}}$. 
By (1), the pair $(Y,(1-\tfrac{1}{m})T)$ is klt, so $(X_{W},0)$ is klt. 
\end{step1}

\begin{step1}\label{step2--const} 
Secondary, we show (ii). 
Note that the divisor $H_{X_{V}}$ is the birational transform of $g_{W*}(p_{V}\circ f)^{*}H_{V}$ on $X_{V}$. 

We use the notation $B_{X_{W}}:=g_{W*}(p_{V}\circ f)^{*}H_{V}$ in Step \ref{step1--const}. 
By (3) and definitions of  $H_{X_{W }}$ and $A_{Y}$, we obtain
$m(B_{X_{W }}+H_{X_{W }})=g_{W*}A_{Y}\sim_{\mathbb{Q},X}0.$ 
Moreover, the divisor $H_{X_{W }}$ is ample over $X$ since $H_{X_{W }}+g_{W *}A_{Y}$ is ample by ($\spadesuit$) and $H_{X_{W }}+g_{W *}A_{Y}\sim_{\mathbb{Q},X}H_{X_{W }}$ by (3). 
Hence $-B_{X_{W }}$ is ample over $X$. 
We also have $H_{X_{V}}\sim_{\mathbb{Q},X}H_{X_{V }}+g_{V *}A_{Y}$ by (3). 
By ($\spadesuit$), we see that $H_{X_{V}}$ is ample over $X$. 
\end{step1}

\begin{step1}\label{step3--const}
Thirdly, we show (iii). 

By applying Leray spectral sequence to $h_{W}\colon X_{W}\to W$, it is sufficient to prove that $R^{q}h_{W*}\mathcal{O}_{X_{W}}=0$ for any $q>0$. 
Since $(X_{W},0)$ is klt, by Kawamata--Viehweg vanishing theorem, it is sufficient to prove that $-K_{X_{W}}$ is nef and big over $W$. 
The argument in Step \ref{step1--const} shows $K_{Y}+(1-\tfrac{1}{m})T=g_{W}^{*}K_{X_{W}}$. 
By (2), we have 
$$K_{Y}+T+A_{Y}+(p_{V}\circ f)^{*}H_{V}+(p_{W}\circ f)^{*}H_{W}\sim_{\mathbb{Q},W} 0.$$
Then, by construction of $A_{Y}$, we obtain 
$$-(K_{Y}+(1-\tfrac{1}{m})T)\sim_{\mathbb{Q},W}A_{Y}+\tfrac{1}{m}T+(p_{V}\circ f)^{*}H_{V}+(p_{W}\circ f)^{*}H_{W}=(1+\tfrac{1}{m})A_{Y}.$$ 
Then $-(K_{Y}+(1-\tfrac{1}{m})T)$ is nef and big over $W$ by (3) and ($\spadesuit$). 
In this way, we see that $-K_{X_{W}}$ is also nef and big over $W$ and therefore $R^{q}h_{W*}\mathcal{O}_{X_{W}}=0$ for any $q>0$. 
Then we have ${\rm dim}H^{p}(X_{W},\mathcal{O}_{X_{W}})={\rm dim}H^{p}(W,\mathcal{O}_{W})$ for any $p>0$. 
\end{step1}

\begin{step1}\label{step4--const}
Finally, we prove (iv).   

By (2) and definition of $H_{X_{V}}$, we have $K_{X_{V}}+g_{V*}A_{Y}+H_{X_{V}}\sim_{\mathbb{Q}}0$. 
Since $g_{V*}A_{Y}+H_{X_{V}}$ is ample by ($\spadesuit$), we see that $-K_{X_{V}}$ is ample. 
By (2), we have 
$$K_{Y}+T+A_{Y}+(p_{V}\circ f)^{*}H_{V}=g_{V}^{*}(K_{X_{V}}+g_{V*}A_{Y}+H_{X_{V}}).$$ 
Moreover, we have $K_{Y}+T=g_{V}^{*}K_{X_{V}}$ by (3) and (4). 
Therefore, $(X_{V},0)$ is lc. 
\end{step1}
We finish the proof. 
\end{proof}

Since $X_{V}$ is lc Fano, there is a $\mathbb{Q}$-divisor $\Delta_{X_{V}}\geq 0$ on $X_{V}$ such that $K_{X_{V}}+\Delta_{X_{V}}\sim_{\mathbb{Q}}0$ and the pair $(X_{V},\Delta_{X_{V}})$ is lc. 
Let $\Delta_{X_{W}}$ be the birational transform of $\Delta_{X_{V}}$ on $X_{W}$. 
Then $K_{X_{W}}+\Delta_{X_{W}}\sim_{\mathbb{Q}}0$ and $(X_{W},\Delta_{X_{W}})$ is lc. 
\end{const}

From now on, we introduce various examples with this construction. 
In the rest of discussions, (i), (ii), (iii), and (iv) denotes the properties (i), (ii), (iii), and (iv) of Proposition \ref{prop--property} respectively. 
Note that although we do not take $V$ explicitly in the examples below, we can take $\mathbb{P}^{n}$ as $V$.

\begin{exam}\label{exam--2}
First we give a flop for a $\mathbb{Q}$-factorial lc pair which is not symmetric. 

Take $W$ as in Construction such that $\rho(W)=1$ and $H^{1}(W,\mathcal{O}_{W})\neq0$, for example, take $W$ as an elliptic curve.
We consider the diagram. 
\begin{equation*}
\xymatrix
{
X_{W}\ar@{-->}[rr]\ar[dr]_{\pi_{W}}&&X_{V} \ar[dl]^{\pi_{V}}\\
&X
}
\end{equation*}
Then $\rho(X_{W}/X)=1$. 
Indeed, by (i), we have $\rho(X_{W})=2$. 
We have $1\leq \rho(X)<\rho(X_{W})$, so $\rho(X)=1$. 
Since $\rho(X_{W}/X)\leq \rho(X_{W})-\rho(X)$, we see that $\rho(X_{W}/X)=1$. 
By (ii), we see that the morphism $\pi_{W}$ is a $\bigl(g_{W*}(p_{V}\circ f)^{*}H_{V}\bigr)$-flipping contraction and $\pi_{V}$ is the flip of $\pi_{W}$. 
Therefore, the diagram is a flop for $K_{X_{W}}+\Delta_{X_{W}}$. 
Moreover, by (i), $X_{W}$ is $\mathbb{Q}$-factorial. 
On the other hand, $X_{V}$ is not $\mathbb{Q}$-factorial. 
Indeed, with notation as in Construction, pick a non-torsion $M\equiv 0$ on $W$ and take the birational transform $M_{X_{V}}$ of $h_{W}^{*}M$ on $X_{V}$. 
If $X_{V}$ is $\mathbb{Q}$-factorial, then $M_{X_{V}}\equiv 0$, so $M_{X_{V}}\sim_{\mathbb{Q}} 0$ by \cite[Theorem 13.1]{fujino-fund} and (iv). 
Then $h_{W}^{*}M\sim_{\mathbb{Q}}0$, hence $M\sim_{\mathbb{Q}}0$ which contradicts our choice of $M$. 
Thus, $X_{V}$ is not $\mathbb{Q}$-factorial. 
In particular, the flop is not symmetric by Lemma \ref{lem--symflop}. 
In this way, the above example shows that flops are not always symmetric. 
So the flops does not satisfies the condition ($*$) in Theorem \ref{thm--main}. 
\end{exam}

\begin{rem}\label{rem--inverseflop} 
We give a further remark on Example \ref{exam--2}. 
Since $\rho(X_{W})=2$ and $\rho(X_{W}/X)=1$, by Lemma \ref{lem--picard}, we have $\rho(X_{V})=2$ and $\rho(X_{V}/X)=1$. 
We see that $K_{X_{W}}$ is ample over $X$ since $0\sim_{\mathbb{Q}}K_{X_{W}}+g_{W*}A_{Y}+g_{W*}(p_{V}\circ f)^{*}H_{V}\sim_{\mathbb{Q},X} K_{X_{W}}+g_{W*}(p_{V}\circ f)^{*}H_{V}$ ((2) and (3)) and $-g_{W*}(p_{V}\circ f)^{*}H_{V}$ is ample over $X$ by (ii). 
Then the diagram
\begin{equation*} \xymatrix { X_{V}\ar@{-->}[rr]\ar[dr]_{\pi_{V}}&&X_{W} \ar[dl]^{\pi_{W}}\\ &X } \end{equation*}
is a $K_{X_{V}}$-flip and a flop for $K_{X_{V}}+\Delta_{X_{V}}$. As we have seen in Example \ref{exam--2}, the flop is not symmetric. In this way, this example shows that flops are not always symmetric even if it is a flip for canonical divisor and the differences between Picard numbers of varieties of the flipping contraction and the flip are one. \end{rem}

\begin{exam}\label{exam--3}
Next, we introduce an example which shows that the assumption on the existence of an open set $U$ in the hypothesis of Theorem \ref{thm--main} and Theorem \ref{thm--pic0} is necessary to obtain the same conclusions as in Theorem \ref{thm--main} and Theorem \ref{thm--pic0}.

In the same way as in Example \ref{exam--2}, we take $W$ as in Construction such that $\rho(W)=1$ and $H^{1}(W,\mathcal{O}_{W})\neq0$. 
We consider the small birational map 
$$X_{V}\dashrightarrow X_{W}$$
and the pairs $(X_{V},\Delta_{X_{V}})$ and $(X_{W},\Delta_{X_{W}})$ which are log minimal models of $(X_{V},\Delta_{X_{V}})$. 

Firstly, we check that the same conclusion as in Theorem \ref{thm--main} does not hold for the lc pairs $(X_{V},\Delta_{X_{V}})$ and $(X_{W},\Delta_{X_{W}})$. 
Note that all small birational morphisms appearing in the conclusion of Theorem \ref{thm--main} are extremal contractions. 
So, if the same conclusion as in Theorem \ref{thm--main} holds for $(X_{V},\Delta_{X_{V}})$ and $(X_{W},\Delta_{X_{W}})$, then the birational transforms of all numerically trivial $\mathbb{Q}$-Cartier divisors by $X_{W}\dashrightarrow X_{V}$ must be $\mathbb{Q}$-Cartier. 
As in the argument in Example \ref{exam--2}, with notation as in Construction, we pick a non-torsion Cartier divisor $M\equiv 0$ on $W$, and consider the birational transform $M_{X_{V}}$ of $h_{W}^{*}M$ on $X_{V}$. 
As in the argument in Example \ref{exam--2}, we see that $M_{X_{V}}$ is not $\mathbb{Q}$-Cartier. 
In this way, the same conclusion as in Theorem \ref{thm--main} does not hold for $(X_{V},\Delta_{X_{V}})$ and $(X_{W},\Delta_{X_{W}})$. 

Secondly, we check that all the statements as in Theorem \ref{thm--pic0} do not hold for the lc pairs $(X_{V},\Delta_{X_{V}})$ and $(X_{W},\Delta_{X_{W}})$. 
By (i), $X_{W}$ is $\mathbb{Q}$-factorial and $\rho(X_{W})=2$. 
The argument in the previous paragraph shows that $X_{V}$ is not $\mathbb{Q}$-factorial. 
From these facts, if $X_{V}$ has a small $\mathbb{Q}$-factorialization $\tilde{X}\to X_{V}$, then $\rho(\tilde{X})>\rho(X_{V})>\rho(X)=1$. 
Therefore we have $\rho(\tilde{X})\geq3$. 
On the other hand, we have $\rho(\tilde{X})\leq \rho(X_{W})=2$ by Lemma \ref{lem--picard}, which is a contradiction. 
From this, we see that $X_{V}$ cannot have a small $\mathbb{Q}$-factorialization. 
In this way, the first assertion of Theorem \ref{thm--pic0} does not hold for $(X_{V},\Delta_{X_{V}})$ and $(X_{W},\Delta_{X_{W}})$. 

The variety $X_{V}$ is lc Fano (see (iv)), so ${\rm dim}H^{1}(X_{V},\mathcal{O}_{X_{V}})=0$ by \cite[Theorem 5.6.4]{fujino-book}. 
On the other hand, we have ${\rm dim}H^{1}(X_{W},\mathcal{O}_{X_{W}})={\rm dim}H^{1}(W,\mathcal{O}_{W})>0$ by (iii). 
Then 
$${\rm dim}H^{1}(X_{V},\mathcal{O}_{X_{V}})\neq {\rm dim}H^{1}(X_{W},\mathcal{O}_{X_{W}})$$
and the second assertion of Theorem \ref{thm--pic0} does not hold for $(X_{V},\Delta_{X_{V}})$ and $(X_{W},\Delta_{X_{W}})$. 

In the third paragraph we have checked that $h_{W}^{*}M\equiv 0$ is Cartier but $M_{X_{V}}$ is not even $\mathbb{Q}$-Cartier. 
So the third assertion of Theorem \ref{thm--pic0} does not hold. 
\end{exam}

\begin{exam}\label{exam--4}
This example shows that to prove the second assertion of Theorem \ref{thm--pic0}, the assumption on existence of an open subset as in Theorem \ref{thm--pic0} is necessary even if both varieties of the lc pairs are $\mathbb{Q}$-factorial.  

Take $W$ as in Construction such that ${\rm Pic}(W)\simeq \mathbb{Z}$ and ${\rm dim}H^{p}(W,\mathcal{O}_{W})>0$ for some $p>0$. 
For example, we take $W$ as a smooth hypersurface of degree $d+2$ in $\mathbb{P}^{d+1}$ for some $d\geq3$ (then ${\rm Pic}(W)\simeq{\rm Pic}(\mathbb{P}^{d+1})\simeq \mathbb{Z}$ by Grothendieck--Lefschetz theorem, and ${\rm dim}H^{d}(W,\mathcal{O}_{W})=1$ by $K_{W}\sim0$ and Serre duality). 
We consider the birational map 
$$X_{V}\dashrightarrow X_{W}.$$
Then the lc pairs $(X_{V},\Delta_{X_{V}})$ and $(X_{W},\Delta_{X_{W}})$ are log minimal models of $(X_{V},\Delta_{X_{V}})$, and $X_{W}$ is $\mathbb{Q}$-factorial by (i). 
In the same way as in Step \ref{step1--const}, we can prove that $X_{V}$ is $\mathbb{Q}$-factorial. 
Indeed, with notation as in Construction, for any Weil divisor $\tilde{D}$ on $X_{V}$ we can find a Cartier divisor $\tilde{D}'$ on $V\times W$ such that $\tilde{D}\sim g_{V*}f^{*}\tilde{D}'$ as Weil divisors. 
Because ${\rm Pic}(W) \simeq \mathbb{Z}$, $V$ is a smooth Fano variety, and $W$ is smooth, we can find $\tilde{\alpha}, \tilde{\beta}\in \mathbb{Z}$ and a Cartier divisor $\tilde{D}_{V}$ on $V$ such that $\tilde{\alpha} \neq 0$ and $\tilde{\alpha} \tilde{D}'\sim p_{V}^{*}\tilde{D}_{V}+\tilde{\beta} p_{W}^{*}H_{W}$ as Cartier divisors (\cite[III, Exercise 12.6 (b)]{hartshorne}).  
Then
$$\tilde{\alpha} \tilde{D}\sim g_{V*}f^{*}(p_{V}^{*}\tilde{D}_{V}+\tilde{\beta} p_{W}^{*}H_{W})=h_{V}^{*}\tilde{D}_{V}+\tilde{\beta} g_{V*}(p_{W}\circ f)^{*}H_{W}$$
as Weil divisors, and the right hand side is $\mathbb{Q}$-Cartier. 
So $X_{V}$ is $\mathbb{Q}$-factorial. 
The variety $X_{V}$ is lc Fano by (iv), then ${\rm dim}H^{p}(X_{V},\mathcal{O}_{X_{V}})=0$ by \cite[Theorem 5.6.4]{fujino-book}. 
On the other hand, we have ${\rm dim}H^{p}(X_{W},\mathcal{O}_{X_{W}})={\rm dim}H^{p}(W,\mathcal{O}_{W})\neq0$ by our choice of $W$. 
\end{exam}

\begin{rem}[cf.~{\cite[Example 3.13.9]{fujino-book}}]\label{rem--lcflip}
As in Example \ref{exam--4}, take $W$ as in Construction such that ${\rm Pic}(W)\simeq \mathbb{Z}$ and ${\rm dim}H^{q}(W,\mathcal{O}_{W})>0$ for some $q>0$. 
As in Remark \ref{rem--inverseflop}, we have $\rho(X_{V}/X)=1$ and the diagram 
\begin{equation*}
\xymatrix
{
X_{V}\ar@{-->}[rr]\ar[dr]_{\pi_{V}}&&X_{W} \ar[dl]^{\pi_{W}}\\
&X
}
\end{equation*}
is a flip for the $\mathbb{Q}$-factorial lc pair $(X_{V},0)$.   
The variety $X_{V}$ is lc Fano by (iv), but $X_{W}$ is not even lc Fano type, i.e., there is no divisor $G\geq0$ on $X_{W}$ such that $(X_{W},G)$ is lc and $-(K_{X_{W}}+G)$ is ample. 
Indeed, if $X_{W}$ is of lc Fano type, then we must have ${\rm dim}H^{q}(X_{W},\mathcal{O}_{X_{W}})=0$ by \cite[Theorem 5.6.4]{fujino-book}, which contradicts our choice of $W$. 
So $X_{W}$ is not of lc Fano type. 
We have $0={\rm dim}H^{q}(X_{V},\mathcal{O}_{X_{V}})$ by \cite[Theorem 5.6.4]{fujino-book}. 

In this way, the above flip shows the following facts:
\begin{itemize}
\item
Flips for $\mathbb{Q}$-factorial lc pairs do not always preserve dimension of cohomology of the structure sheaf, and
\item
Flips for $\mathbb{Q}$-factorial lc pairs do not always keep property of being lc Fano type. 
\end{itemize}
\end{rem}

\begin{exam}\label{exam--6}
Finally, we give two log minimal models of a projective lc pair such that the Cartier indices of log canonical divisors are different. 

We take $W$ as in Construction such that $H^{0}(W,\mathcal{O}_{W}(K_{W}))=0$, for example, we take $W$ as an Enriques surface. 
We take $V$ so that $-K_{V}$ is a base point free Cartier divisor. 
By construction, we have
$$K_{Y}+T+A_{Y}-(p_{V}\circ f)^{*}K_{V}-(p_{W}\circ f)^{*}K_{W}\sim0$$
and both $A_{Y}$ and $-(p_{V}\circ f)^{*}K_{V}$ are base point free Cartier divisors. 
We pick a general member $G_{Y}$ in the linear system of $A_{Y}-(p_{V}\circ f)^{*}K_{V}$ such that $(Y,T+G_{Y})$ is lc. 
Let $G_{X_{W}}$ and $G_{X}$ be the birational transforms of $G_{Y}$ on $X_{W}$ and $X$, respectively. 
Because $K_{W}\sim_{\mathbb{Q}}0$, we obtain $K_{X_{W}}+G_{X_{W}}\sim_{\mathbb{Q}}0$ and 
$K_{X}+G_{X}\sim_{\mathbb{Q}}0$, and both $(X_{W}, G_{X_{W}})$ and $(X,G_{X})$ are lc. 
In particular, $(X_{W}, G_{X_{W}})$ and $(X,G_{X})$ are log minimal models of $(X,G_{X})$. 
We prove that $K_{X_{W}}+G_{X_{W}}$ is Cartier but $K_{X}+G_{X}$ is not Cartier. 
We have
$$K_{X_{W}}+G_{X_{W}}\sim g_{W*}(K_{Y}+T+A_{Y}-(p_{V}\circ f)^{*}K_{V})\sim g_{W*}(p_{W}\circ f)^{*}K_{W}=h_{W}^{*}K_{W}.$$
Therefore $K_{X_{W}}+G_{X_{W}}$ is Cartier. 
On the other hand, by construction, $K_{Y}+T+G_{Y}$ is equal to the pullback of $K_{X}+G_{X}$. 
Recall that the image of $T$ by the morphism $Y\to X$ is a point. 
From these facts, if $K_{X}+G_{X}$ is Cartier then $(K_{Y}+T+G_{Y})|_{T}\sim 0$. 
We also have $K_{Y}+T+G_{Y} \sim (p_{W}\circ f)^{*}K_{W}$, furthermore, $T\simeq V\times W$ which implies that the morphism $(p_{W}\circ f)|_{T} \colon T\to W$ is a contraction. Therefore, we have 
$${\rm dim}H^{0}(W,\mathcal{O}_{W}(K_{W}))={\rm dim}H^{0}(T,\mathcal{O}_{T}((K_{Y}+T+G_{Y})|_{T}))=1,$$
a contradiction. 
In this way, $K_{X}+G_{X}$ is not Cartier. 
From this discussion, we see that Cartier index of $K_{X_{W}}+G_{X_{W}}$ is different from that of $K_{X}+G_{X}$. 
\end{exam}



\begin{thebibliography}{99}













\bibitem{bchm}C.~Birkar, P.~Cascini, C.~D.~Hacon, J.~M\textsuperscript{c}Kernan, Existence of minimal models for varieties of log general type, J. Amer. Math. Soc. {\textbf{23}}(2010), no.~2, 405--468.






















\bibitem{fujino-fund}O.~Fujino, Fundamental theorems for the log minimal model program, Publ. Res. Inst. Math. Sci. {\textbf{47}} (2011), no.~3, 727--789. 






\bibitem{fujino-remarks}O.~Fujino, Some remarks on the minimal model program for log canonical pairs, J. Math. Sci. Univ. Tokyo {\textbf{22}} (2015), no.~1, 149--192. 

\bibitem{fujino-book}O.~Fujino, {\em Foundations of the minimal model program}, MSJ Mem. \textbf{35}, Mathematical Society in Japan, Tokyo, 2017. 































\bibitem{hartshorne} R.~Hartshorne, {\em{Algebraic geometry}}, Graduate Texts in Mathematics, No. {\textbf{52}}. Springer-Verlag, New York-Heidelberg, 1977.



\bibitem{has-mmp}
K.~Hashizume, 
Remarks on special kinds of the relative log minimal model program, Manuscripta Math. {\textbf{160}} (2019), no.~3--4, 285--314. 


\bibitem{hashizumehu}
K.~Hashizume, Z.~Hu, 
On minimal model theory for log abundant lc pairs, to appear in J. Reine Angew. Math.





\bibitem{kawamata-flop}
Y.~Kawamata, flops connect minimal models, Publ. Res. Inst. Math. Sci. {\textbf{44}} (2008), no.~2, 419--423. 














\bibitem{kollar-exercise} J.~Kollar, 
Exercises in the birational geometry of algebraic varieties, {\it Analytic and algebraic variety}, 495--524, IAS/Park City Math Ser,. {\textbf{17}}, Amer. Math. Soc., Providence, RI, 2010. 

\bibitem{kollarkovacs}J.~Koll\'ar, S.~Kov\'acs, Log canonical singularities are Du Bois, J. Amer. Math. Soc. {\textbf{23}} (2010), no.~3, 791--813.


\bibitem{kollar-mori} J.~Koll\'ar, S.~Mori, {\it{Birational geometry of algebraic varieties}}. With the collaboration of C.~H.~Clemens and A.~Corti. Translated from the 1998 Japanese original. Cambridge Tracts in Mathematics, {\textbf{134}}. Cambridge University Press, Cambridge, 1998.


















\end{thebibliography}
\end{document}